\documentclass[a4paper,10pt,oneside,headsepline]{scrartcl}

\usepackage{etoolbox}
\usepackage{ifdraft}
\usepackage{iftex}

%%%%%%%%%%%%%%%%%%%%%%%%%%%%%%%%%%%%%%%%%%%%%%%%%%
%%% ENCODING

\ifluatex
\usepackage[utf8]{luainputenc}
\usepackage[T1]{fontenc}
\fi

%%%%%%%%%%%%%%%%%%%%%%%%%%%%%%%%%%%%%%%%%%%%%%%%%%
%%% FONTS

\usepackage{lmodern}
\usepackage{fourier}

\ifluatex
\usepackage{fontspec}
\fi
\ifxetex
\usepackage{fontspec}
\fi
\usepackage{newunicodechar}

% comment out when using mathdisplay
\usepackage{amssymb, amsfonts}

\usepackage{mathrsfs} % script characters (\mathscr)
\usepackage{dsfont}
\usepackage[usenames,dvipsnames]{xcolor}%, colortbl}
\usepackage{lettrine}
\usepackage{url}

%%%%%%%%%%%%%%%%%%%%%%%%%%%%%%%%%%%%%%%%%%%%%%%%%%
%%% SYMBOLS

\usepackage{stmaryrd}
% More characters
%\usepackage{textcomp}
% Various characters; load after amsmath
%\usepackage{mathbbol}
% Script characters (\mathscr)
%\usepackage{mathrsfs}
% Physical units
%\usepackage{units}
% Other fractions (slash)
%\usepackage{nicefrac}
%\usepackage{eurosym}

%%%%%%%%%%%%%%%%%%%%%%%%%%%%%%%%%%%%%%%%%%%%%%%%%%
%%% LANGUAGE SUPPORT

\usepackage[english]{babel}

%%%%%%%%%%%%%%%%%%%%%%%%%%%%%%%%%%%%%%%%%%%%%%%%%%
%%% LAYOUT

\usepackage[draft=false]{scrlayer-scrpage}

\usepackage{wrapfig}
\usepackage{footmisc}
\usepackage{needspace}

\usepackage{amsmath}
\usepackage{nccmath}
\usepackage[thmmarks, amsmath, amsthm]{ntheorem}

%%%%%%%%%%%%%%%%%%%%%%%%%%%%%%%%%%%%%%%%%%%%%%%%%%
%%% REFERENCES

% In case publishers don't support biblatex, switch to amsref (or plain bibtex)
\providebool{nobiblatex}
\boolfalse{nobiblatex}
\ifbool{nobiblatex}{%
}{%
  \usepackage[backend=biber,style=numeric-comp,giveninits,url=false,sortcites=true,maxbibnames=99]{biblatex}
  \addbibresource{bibliography.bib}
}

\usepackage[final,
            pdfborder={0 0 0}, colorlinks=true,
            linkcolor=BrickRed, citecolor=ForestGreen, urlcolor=RoyalBlue]{hyperref}

%%%%%%%%%%%%%%%%%%%%%%%%%%%%%%%%%%%%%%%%%%%%%%%%%%
%%% MISC

\ifdraft{%
\usepackage[textsize=scriptsize,colorinlistoftodos]{todonotes}
% NOTE: \linenumbers must be activated to use this
\usepackage{lineno}
}{}

\usepackage[ruled]{algorithm2e}
\usepackage{booktabs}
\usepackage[inline,shortlabels]{enumitem}
\usepackage{float}
\usepackage{graphicx}
\usepackage{multicol}
\usepackage{scrtime}
\usepackage{tikz}
%\usetikzlibrary{calc}
\usetikzlibrary{matrix,arrows,positioning}
\usepackage{tikz-cd}
\usepackage{adjustbox}

\ifdraft{\date{\today}}{\date{}}

%%%%%%%%%%%%%%%%%%%%%%%%%%%%%%%%%%%%%%%%%%%%%%%%%%
%%% MISC

\linespread{1.05}
%\linespread{3}

\ifdraft{%
% NOTE: package lineno must be activated
\linenumbers
\newcommand*\patchAmsMathEnvironmentForLineno[1]{%
  \expandafter\let\csname old#1\expandafter\endcsname\csname #1\endcsname
  \expandafter\let\csname oldend#1\expandafter\endcsname\csname end#1\endcsname
  \renewenvironment{#1}%
     {\linenomath\csname old#1\endcsname}%
     {\csname oldend#1\endcsname\endlinenomath}}% 
\newcommand*\patchBothAmsMathEnvironmentsForLineno[1]{%
  \patchAmsMathEnvironmentForLineno{#1}%
  \patchAmsMathEnvironmentForLineno{#1*}}%
\AtBeginDocument{%
\patchBothAmsMathEnvironmentsForLineno{equation}%
\patchBothAmsMathEnvironmentsForLineno{align}%
\patchBothAmsMathEnvironmentsForLineno{flalign}%
\patchBothAmsMathEnvironmentsForLineno{alignat}%
\patchBothAmsMathEnvironmentsForLineno{gather}%
\patchBothAmsMathEnvironmentsForLineno{multline}%
}}

% avoid line breakes
%\sloppy

% add space after punctuation marks.
%\nonfrenchspacing

% itemization spacing for package enumitem
%\setitemize{itemsep=0.05em}

%%%%%%%%%%%%%%%%%%%%%%%%%%%%%%%%%%%%%%%%%%%%%%%%%%
%%% PAGE GEOMETRIE

\KOMAoptions{DIV=10}

%%%%%%%%%%%%%%%%%%%%%%%%%%%%%%%%%%%%%%%%%%%%%%%%%%
%%% PAGE HEADERS

\pagestyle{scrheadings}
\clearmainofpairofpagestyles
\automark[section]{subsection}

\renewcommand{\subsectionmark}[1]{}

\cfoot[--\,\pagemark\,--]{--\,\pagemark\,--}
\lohead{{\small \headertitle}}
% \lohead{{\small \headertitle \,---\, \rightmark}}
\rohead{{\small \headerauthors}}

%%%%%%%%%%%%%%%%%%%%%%%%%%%%%%%%%%%%%%%%%%%%%%%%%%
%%% SECTION TITLES

\RedeclareSectionCommand[%
  font=\Large\sffamily\bfseries,%
  beforeskip=1\baselineskip,%
  afterskip=0.5\baselineskip,%
  indent=0em%
  ]{section}

\RedeclareSectionCommands[%
  font=\normalfont\bfseries,%
  afterskip=-1em%
  ]{subsection,subsubsection}

\RedeclareSectionCommands[%
  font=\normalfont\itshape,%
  afterskip=-1em,%
  indent=0pt,
  ]{paragraph}

% \sectionformat does not work and I dont know why
% \renewcommand*{\sectionformat}{%
%   \thesection\autodot\enskip}
% \renewcommand*{\othersectionlevelsformat}[3]{%
%   \S\hspace{.1em}#3\autodot\enskip}

%%%%%%%%%%%%%%%%%%%%%%%%%%%%%%%%%%%%%%%%%%%%%%%%%%
%% BIBLIOGRAPHY

\ifbool{nobiblatex}{}{%
  \AtEveryBibitem{\clearfield{doi}}
  \AtEveryBibitem{\clearfield{isbn}}
  \AtEveryBibitem{\clearfield{issn}}
  \AtEveryBibitem{\clearfield{pages}}
  \AtEveryBibitem{\clearlist{language}}

  \setlength\bibitemsep{1pt}
  
  \renewbibmacro*{in:}{}
  \DeclareFieldFormat
    [article,inbook,incollection,inproceedings,patent,thesis,unpublished]
    {title}{#1}
}

%%%%%%%%%%%%%%%%%%%%%%%%%%%%%%%%%%%%%%%%%%%%%%%%%%
%% ENUMERATION 

\newenvironment{enumeratearabic*}{
\begin{enumerate*}[label=(\arabic*)] %%, leftmargin=0pt,labelindent=2em,itemindent=!]
}{
\end{enumerate*}
}

\newenvironment{enumerateroman*}{
\begin{enumerate*}[label=(\roman*)] %%, leftmargin=0pt,labelindent=2em,itemindent=!]
}{
\end{enumerate*}
}

%%%%%%%%%%%%%%%%%%%%%%%%%%%%%%%%%%%%%%%%%%%%%%%%%%
%% THEOREM LIKE ENVIRONMENTS

\numberwithin{equation}{section}

\theoremnumbering{arabic}
\newtheorem{theoremcounter}{theoremcounter}[section]
\theoremnumbering{Alph}
\newtheorem{maintheoremcounter}{maintheoremcounter}

\theoremstyle{plain}

\newtheorem{proposition}[theoremcounter]{Proposition}
\newtheorem{theorem}[theoremcounter]{Theorem}
\newtheorem*{untheorem}{Theorem}
\theoremstyle{plain}

\newtheorem{maintheorem}[maintheoremcounter]{Theorem}

\theoremstyle{definition}

\theoremstyle{remark}

\newtheorem{remark}[theoremcounter]{Remark}

\theoremstyle{nonumberremark}

\newtheorem{mainremark}{Remark}

%

%

%%%%%%%%%%%%%%%%%%%%%%%%%%%%%%%%%%%%%%%%%%%%%%%%%%
%%% LISTING SETTINGS

% \lstdefinelanguage{julia}
% {%
%   keywordsprefix=\@,
%   morekeywords={%
%     error,throw,assert,
%     begin,end,
%     if,elseif,else,
%     try,catch,
%     while,for,do,break,continue,
%     function,return,where,
%     type,mutable,abstract,const,
%     in,
%     macro,
%     quote,
%     let,
%     ccall,
%     using,module,import,export,importall,
%     local,global,
%     Any,Nothing,None,Nullable,
%   },
%   sensitive=true,
%   morecomment=[l]{\#},
%   morestring=[b]",
% }
% 
% \lstset
% {%
%   language=julia,
%   numbers=left,
%   stepnumber=1,
%   numberfirstline=true, 
%   numberblanklines=true, 
%   numbersep=10pt, 
%   basicstyle=\small\ttfamily,
%   keywordstyle=\bfseries\color{BlueViolet},
%   commentstyle=\color{OliveGreen},
%   stringstyle=\color{Sepia},
%   numberstyle=\tiny\ttfamily\color{Gray},
%   identifierstyle=,
%   breaklines=false,
%   upquote=false,
%   showstringspaces=false,
%   showtabs=false,
%   tabsize=1,
%   inputencoding=utf8,
%   extendedchars=true,
%   literate={⊗}{{$\otimes$}}1,
% }

%%%%%%%%%%%%%%%%%%%%%%%%%%%%%%%%%%%%%%%%%%%%%%%%%%
%%% PYTHON KEYWORDS FOR ALGORITHM2E

% \SetStartEndCondition{ }{}{}%

% \SetKwProg{Pydef}{def}{\string:}{}
% \SetKwComment{tcc}{\#\ }{}

% \SetKwFor{For}{for}{\string:}{}%
% \SetKwFor{While}{while}{:}{fintq}%
% \SetKwIF{If}{ElseIf}{Else}{if}{:}{elif}{else:}{}%

% \SetKwFunction{Range}{range}%%
% \SetKw{KwIn}{in}
% \SetKw{KwNot}{not}
% \SetKw{KwOr}{or}
% \SetKw{KwInlineIf}{if}

% \AlgoDontDisplayBlockMarkers\SetAlgoNoEnd\SetAlgoNoLine%

% text
\newcommand{\tx}{\ensuremath{\text}}
% th dash

% non-breaking dash

% texorpdfstring for hyperref
\ifdraft{
 \newcommand{\texpdf}[2]{#1}
}{
 \newcommand{\texpdf}[2]{\texorpdfstring{#1}{#2}}
}

%%%%%%%%%%%%%%%%%%%%%%%%%%%%%%%%%%%%%%%%%%%%%%%%%%%%%%%%%%%%
% Fonts
%%%%%%%%%%%%%%%%%%%%%%%%%%%%%%%%%%%%%%%%%%%%%%%%%%%%%%%%%%%%

\newcommand{\tbf}{\bfseries}

% backboard (upper case) letters

% callicrafic (upper case) letters

\newcommand{\cE}{\ensuremath{\mathcal{E}}}

% fraktura lower case letters

% fraktura upper case letters

% mathrm lower case letters

% mathrm upper case letters

\newcommand{\rmB}{\ensuremath{\mathrm{B}}}

\newcommand{\rmE}{\ensuremath{\mathrm{E}}}

\newcommand{\rmH}{\ensuremath{\mathrm{H}}}

\newcommand{\rmL}{\ensuremath{\mathrm{L}}}
\newcommand{\rmM}{\ensuremath{\mathrm{M}}}

\newcommand{\rmS}{\ensuremath{\mathrm{S}}}

\newcommand{\rmZ}{\ensuremath{\mathrm{Z}}}

% script upper case letters

%% decorators

\newcommand{\wtd}{\widetilde}
\newcommand{\ov}{\overline}

%%%%%%%%%%%%%%%%%%%%%%%%%%%%%%%%%%%%%%%%%%%%%%%%%%%%%%%%%%%%
% math symbols
%%%%%%%%%%%%%%%%%%%%%%%%%%%%%%%%%%%%%%%%%%%%%%%%%%%%%%%%%%%%

% brackets

% arrows

\newcommand{\lra}{\ensuremath{\longrightarrow}}

\newcommand{\lmto}{\ensuremath{\longmapsto}}

% real and imaginary part
\renewcommand{\Re}{\ensuremath{\mathrm{Re}}}
\renewcommand{\Im}{\ensuremath{\mathrm{Im}}}

% divisibility

% reduction modulo (with smaller spacing)

% sign function

% homormophisms
\newcommand{\Hom}{\ensuremath{\mathrm{Hom}}}
% identity

% matrices
\newenvironment{psmatrix}{\left(\begin{smallmatrix}}{\end{smallmatrix}\right)}

% transpose written right and left

% trace

% linear span

% rank

% common monoids, rings, fields

\newcommand{\ZZ}{\ensuremath{\mathbb{Z}}}
\newcommand{\QQ}{\ensuremath{\mathbb{Q}}}
\newcommand{\RR}{\ensuremath{\mathbb{R}}}
\newcommand{\CC}{\ensuremath{\mathbb{C}}}

% common groups
\newcommand{\GL}[1]{\ensuremath{\mathrm{GL}_{#1}}}

\newcommand{\SL}[1]{\ensuremath{\mathrm{SL}_{#1}}}

% commong representations

\newcommand{\sym}{\ensuremath{\mathrm{sym}}}

% common sets
\newcommand{\HS}{\mathbb{H}}

% The original boxplus is high and not very binary operation-like.
\mathchardef\xboxplus=\numexpr \boxplus-"2000\relax
\renewcommand{\boxplus}{\mathbin{\mathop{\xboxplus}}}

\newcommand{\sfd}{\mathsf{d}}
\renewcommand{\sym}[1]{\mathrm{sym}^{#1}}
\newcommand{\symd}{\sym{\sfd}}

\newcommand{\bfone}{\mathbf{1}}

\renewcommand{\SL}[2]{\mathrm{SL}_{#1}({#2})}
\renewcommand{\GL}[1]{\mathrm{GL}({#1})}
\newcommand{\GLbig}[1]{\mathrm{GL}\big({#1}\big)}
\newcommand{\Ga}{\Gamma}
\newcommand{\ga}{\gamma}
\renewcommand{\a}{\alpha}

\renewcommand{\d}[1]{\mathop{}\mathrm{d}{#1}}
\newcommand{\bs}{\backslash}
\newcommand{\Ext}{\mathrm{Ext}}

\renewcommand{\Im}{\mathrm{Im}}
\renewcommand{\Re}{\mathrm{Re}}
\newcommand{\bp}{\boxplus}

\newcommand{\ol}{\overline}

\newcommand{\llangle}{\langle\!\langle}
\newcommand{\rrangle}{\rangle\!\rangle}

%in case we want large matrices
\setcounter{MaxMatrixCols}{13}

%%%%%%%%%%%%%%%%%%%%%%%%%%%%%%%%%%%%%%%%%%%%%%%%%%
%%% TITLE INFORMATION

\newcommand{\headertitle}{{\normalfont%
  Depth two Eichler-Shimura Integrals of Cusp Forms
}}
\newcommand{\headerauthors}{%
  T.~Magnusson,
  M.~Raum%
}
\title{%
  Scalar-valued depth two Eichler-Shimura Integrals of Cusp Forms
}
\author{%
  Tobias Magnusson%
\and
  Martin Raum%
\thanks{The author was partially supported by Vetenskapsr\aa det Grant~2019-03551.}%
}
%\date{\today\ at\ \thistime}

\begin{document}

\thispagestyle{scrplain}
\begingroup
\deffootnote[1em]{1.5em}{1em}{\thefootnotemark}
\maketitle
\endgroup

%%%%%%%%%%%%%%%%%%%%%%%%%%%%%%%%%%%%%%%%%%%%%%%%%%
%%% ABSTRACT

\begin{abstract}
\small
\noindent
{\tbf Abstract:}
Given cusp forms~$f$ and~$g$ of integral weight~$k \ge 2$, the
depth two holomorphic iterated Eichler-Shimura integral~$I_{f,g}$ is
defined by~${\int_\tau^{i\infty}f(z)(X-z)^{k-2}I_g(z;Y)\d{z}}$, where $I_g$ is
the Eichler integral of $g$ and $X,Y$ are formal variables.
We provide an explicit vector-valued modular
form whose top components are given by~$I_{f,g}$. We show that
this vector-valued modular form gives rise to a \emph{scalar-valued} iterated Eichler integral
of depth two, denoted by $\cE_{f,g}$, that can be seen as a higher-depth generalization of the scalar-valued
Eichler integral $\cE_f$ of depth one. As an aside, our argument provides an alternative
explanation of an orthogonality relation satisfied by period polynomials originally
due to Pa\c{s}ol-Popa. We show that $\cE_{f,g}$ can be expressed in terms of sums of products
of components of vector-valued Eisenstein series with classical modular forms
after multiplication with a suitable power of the discriminant modular form $\Delta$.
This allows for effective computation of $\cE_{f,g}$.
\\[.3\baselineskip]
\noindent
\textsf{\textbf{%
  iterated Eichler-Shimura integrals%
}}%
\noindent
\ {\tiny$\blacksquare$}\ %
\textsf{\textbf{%
  vector-valued modular forms%
}}%
\noindent
\ {\tiny$\blacksquare$}\ %
\textsf{\textbf{%
  Eichler cohomology%
}}
\\[.2\baselineskip]
\noindent
\textsf{\textbf{%
  MSC Primary:
  11F11
}}%
\ {\tiny$\blacksquare$}\ %
\textsf{\textbf{%
  MSC Secondary:
  11F30, 11F75
}}
\end{abstract}

%%%%%%%%%%%%%%%%%%%%%%%%%%%%%%%%%%%%%%%%%%%%%%%%%%
%%% TABLE OF CONTENTS

% \vspace{-1.5em}
% \renewcommand{\contentsname}{}
% \setcounter{tocdepth}{2}
% \tableofcontents
% \vspace{1.5em}

%%%%%%%%%%%%%%%%%%%%%%%%%%%%%%%%%%%%%%%%%%%%%%%%%%
%%% INTRODUCTION

\Needspace*{4em}
\addcontentsline{toc}{section}{Introduction}
\markright{Introduction}
\lettrine[lines=2,nindent=.2em]{\tbf I}{terated} Eichler-Shimura integrals have received a lot of interest in recent years.
For example they have been studied extensively by
Brown~\cite{brown_class_2018,brown_class_2020,brown_multiple_2017}, especially in
the context of iterated extensions of motives and multiple modular values. They
have also been related to what are known as higher order modular forms by
Diamantis~\cite{diamantis_modular_2022}. Furthermore, they are closely related to
the string theoretic notion of modular graph
functions~\cite{dorigoni_poincare_2022,dorigoni_poincare_2022-1,dhoker_integral_2019,dhoker_fourier_2018}.

In this paper, we examine iterated Eichler-Shimura integrals
of depth two in more detail. We first recall the definition of usual
Eichler integrals. There are two kinds of them --- scalar-valued and
polynomial-valued ones --- and they are given as follows. Let $f\in\rmS_k$, $k\in\ZZ_{\geq 2}$,
be a cusp form of level one, $X$ a formal variable, and~$\tau \in \HS$ in the Poincaré upper half plane, then the polynomial-valued
Eichler integral~$I_f(\,\cdot\,;X)$ and the scalar-valued Eichler integral~$\cE_f$ are given by
\begin{gather}
  \label{eq:depth_one_eich}
  I_f(\tau;X)
=
  \int_\tau^{i\infty} f(z)\,(X-z)^{k-2}\d{z}
\quad\tx{and}\quad
  \cE_f(\tau)
=
  \int_\tau^{i\infty} f(z)\,(\tau-z)^{k-2}\d{z}
\tx{.}
\end{gather}
Note that $I_f(\tau;\tau)=\cE_f(\tau)$. If $f,g\in\rmS_k$ are cusp forms
the depth two Eichler-Shimura integral is given by
\begin{gather}
  I_{f,g}(\tau;X,Y)=\int_\tau^{i\infty}f(z)\,(X-z)^{k-2}\,I_g(z;Y)\d{z}\tx{.}
\end{gather}
This Eichler-Shimura integral is well-understood and its definition can readily
be generalized to arbitrary depths~\cite{manin_iterated_2005,manin_iterated_2006}.
We provide a scalar-valued analogue $\cE_{f,g}$ to~$I_{f,g}$. It is given by
\begin{gather}
  \cE_{f,g}(\tau)=\int_\tau^{i\infty}f(z)\,\cE_g(z)\d{z}\tx{.}
\end{gather}
We are not aware of any previous occurrence of $\cE_{f,g}$ in the literature, but remark that its definition generalizes the one of~$\cE_f$ in a straightforward way. The first main
focus of this paper is to provide a connection between the geometrically motivated
$I_{f,g}$ and its classical counterpart $\cE_{f,g}$ paralleling the connection of
 $I_f$ to $\cE_f$. The second main focus of this paper is to provide a
framework that enables the effective computation of $\cE_{f,g}$ following the
approach taken in~\cite{ahlback_eichler_2022}.

We study $\cE_{f,g}$ and $I_{f,g}$ using the language of vector-valued modular forms. 
The work of Brown~\cite{brown_multiple_2017} 
implies via arguments of Mertens--Raum~\cite{mertens_modular_2021}
that $I_{f,g}$ is a component of a vector-valued modular form. However,
they do not specify this vector-valued modular form explicitly. This is the purpose
of our first theorem.

For an integer $\sfd\geq 0$, $\symd(X)$ denotes the $\sfd$th symmetric power
of the standard representation, whose representation space is the space $\CC[X]_\sfd$
of complex polynomials in $X$ of degree at most $\sfd$. We write $\symd(X,Y)$ for
the tensor product $\symd(X)\otimes\symd(Y)$, and $\CC[X,Y]_\sfd$ for its representation space.
There is a $\symd(X)$-invariant pairing on $\CC[X]_\sfd\times\CC[Y]_\sfd$ 
denoted by $\langle\,\cdot\,,\,\cdot\,\rangle$.
For the precise definitions, see Section~\ref{sec:prelim}.

Echoing $I_{f,g}$ and $\cE_{f,g}$, we consider the following
two related representations:
\begin{align}
  \begin{split}
  \widetilde{\rho}_{f,g}:\SL2\ZZ&\lra\GLbig{\CC[X,Y]_{k-2}\oplus\CC[X]_{k-2}\oplus\CC}
\tx{,}\\
  \ga
&\lmto
  \begin{pmatrix}
    \sym{k-2}(X,Y)(\ga) & \phi_{I_g}(\ga;Y)\cdot\sym{k-2}(X)(\ga) & \widetilde{\psi}_{f,g}(\ga)\\
    0 & \sym{k-2}(X)(\ga) & \phi_{I_f}(\ga)\\
    0 & 0 & 1
  \end{pmatrix}\tx{, and}\label{eq:fat_rep}
  \end{split}\\
  \begin{split}
  \rho_{f,g}:\SL2\ZZ&\lra\GLbig{\CC\oplus\CC[X]_{k-2}\oplus\CC}
\tx{,}\\
  \ga
&\lmto
  \begin{pmatrix}
    1 & -\phi_{I_g}^\vee(\ga) & \psi_{f,g}(\ga)\\
    0 & \sym{k-2}(X)(\ga) & \phi_{I_f}(\ga)\\
    0 & 0 & 1
  \end{pmatrix}\tx{,}\label{eq:thin_rep}
  \end{split}
\end{align}
where for a cusp form $h\in\rmS_k$ and a formal variable $W$, we set
\begin{gather*}
  \phi_{I_h}^\vee(\ga)(v)
=
  \big\langle \phi_{I_h}(\ga^{-1}),\, v \big\rangle
\tx{,\ }
  v\in\CC[X]_{k-2}
\tx{,}
\quad\tx{with}\quad
  \phi_{I_h}(\ga;W)
=
  \int^{i\infty}_{\ga(i\infty)}h(z)\,(W-z)^{k-2}\d{z}
\end{gather*}
and
\begin{gather*}
  \widetilde{\psi}_{f,g}(\ga)
=
  \int^{i\infty}_{\ga(i\infty)}f(z)\,(X-z)^{k-2}\,I_g(z;Y)\d{z}
\tx{,}\quad
  \psi_{f,g}(\ga)
=
  \int_{\ga(i\infty)}^{i\infty}f(z)\,\cE_g(z)\d{z}\tx{.}
\end{gather*}
Recall that $\phi_{I_h}$ is a parabolic $\symd(X)$-cocycle, see
Section~\ref{sec:prelim} and for example~\cite{kohnen_modular_1984}.

As mentioned previously, our first main theorem provides an explicit vector-valued modular form of type
$\widetilde{\rho}_{f,g}$ with $I_{f,g}$ as a component, thus amending Brown's results~\cite{brown_multiple_2017}. We write~$\rmM_k(\rho)$ for the space of weight~$k$ modular forms of type~$\rho$, which is defined in Section~\ref{ssec:vector_valued_modular_forms}.
\begin{maintheorem}
  \label{mainthm:fat_rep}
  Let $k\geq 2$ be an even integer, and let $f,g\in\rmS_k$. Then the arithmetic
  type $\widetilde{\rho}_{f,g}$ given by~\eqref{eq:fat_rep} is well-defined, and we have that
  \begin{gather*}
    \begin{pmatrix}
      I_{f,g}\\
      I_f\\
      1
    \end{pmatrix}
  \in
    \rmM_0\big( \widetilde{\rho}_{f,g} \big)\tx{.}
  \end{gather*}
\end{maintheorem}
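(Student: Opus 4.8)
The plan is to establish two statements: that $\widetilde\rho_{f,g}$ is a group homomorphism, so that it is a well-defined arithmetic type, and that the column $F=(I_{f,g},I_f,1)^{\rmt}$ obeys the weight-zero transformation law, which in the convention of Section~\ref{ssec:vector_valued_modular_forms} reads $F(\ga\tau)=\widetilde\rho_{f,g}(\ga)\,F(\tau)$ for every $\ga\in\SL2\ZZ$. Since $\widetilde\rho_{f,g}(\ga)$ is block upper triangular, the homomorphism property decomposes into five identities, one per nonzero block. The diagonal blocks $\sym{k-2}(X,Y)=\sym{k-2}(X)\otimes\sym{k-2}(Y)$, $\sym{k-2}(X)$ and $1$ are representations by construction; the identity for the entry $\phi_{I_f}$ is exactly the statement that $\phi_{I_f}$ is a $\sym{k-2}(X)$-cocycle; and the identity for the entry $\phi_{I_g}(\ga;Y)\,\sym{k-2}(X)(\ga)$ collapses, after cancelling the common factor $\sym{k-2}(X)(\ga_1)\sym{k-2}(X)(\ga_2)=\sym{k-2}(X)(\ga_1\ga_2)$, to the $\sym{k-2}(Y)$-cocycle relation for $\phi_{I_g}$. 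All of these I would take from the depth-one theory recalled in Section~\ref{sec:prelim} (cf.~\cite{kohnen_modular_1984}). The only genuinely new input is the corner identity
\[ \widetilde\psi_{f,g}(\ga_1\ga_2)=\sym{k-2}(X,Y)(\ga_1)\,\widetilde\psi_{f,g}(\ga_2)+\phi_{I_g}(\ga_1;Y)\,\sym{k-2}(X)(\ga_1)\,\phi_{I_f}(\ga_2)+\widetilde\psi_{f,g}(\ga_1), \]
a depth-two (inhomogeneous) cocycle relation; everything hinges on it.

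For the transformation law I would argue directly, reducing the top row to the same corner identity. The middle and bottom rows are precisely the classical fact that $(I_f,1)^{\rmt}$ is a weight-zero form of type $\left(\begin{smallmatrix}\sym{k-2}(X)&\phi_{I_f}\\0&1\end{smallmatrix}\right)$, i.e.\ $I_f(\ga\tau;X)=\sym{k-2}(X)(\ga)\,I_f(\tau;X)+\phi_{I_f}(\ga;X)$. For the top row I substitute $z=\ga w$ in $I_{f,g}(\ga\tau;X,Y)=\int_{\ga\tau}^{i\infty}f(z)(X-z)^{k-2}I_g(z;Y)\,\d z$. Modularity of $f$ and the elementary identity $X-\ga w=(a-cX)(\ga^{-1}X-w)(cw+d)^{-1}$ (for $\ga=\left(\begin{smallmatrix}a&b\\c&d\end{smallmatrix}\right)$) make the one-form $f(z)(X-z)^{k-2}\,\d z$ pull back under $z\mapsto\ga w$ to $\sym{k-2}(X)(\ga)\big[f(w)(X-w)^{k-2}\big]\,\d w$, while the inner integral transforms by the depth-one law $I_g(\ga w;Y)=\sym{k-2}(Y)(\ga)\,I_g(w;Y)+\phi_{I_g}(\ga;Y)$. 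The homogeneous summand of the latter assembles $\sym{k-2}(X)(\ga)\otimes\sym{k-2}(Y)(\ga)=\sym{k-2}(X,Y)(\ga)$ acting on $\int_\tau^{\ga^{-1}(i\infty)}f(w)(X-w)^{k-2}I_g(w;Y)\,\d w$, and the inhomogeneous summand factors out $\phi_{I_g}(\ga;Y)$ in front of $\sym{k-2}(X)(\ga)\int_\tau^{\ga^{-1}(i\infty)}f(w)(X-w)^{k-2}\,\d w$. Splitting each contour as $\int_\tau^{\ga^{-1}(i\infty)}=\int_\tau^{i\infty}-\int_{\ga^{-1}(i\infty)}^{i\infty}$ and recognising the resulting boundary integrals as $\widetilde\psi_{f,g}(\ga^{-1})$ and $\phi_{I_f}(\ga^{-1})$ respectively, I find that $F(\ga\tau)=\widetilde\rho_{f,g}(\ga)F(\tau)$ holds precisely when
\[ \widetilde\psi_{f,g}(\ga)=-\sym{k-2}(X,Y)(\ga)\,\widetilde\psi_{f,g}(\ga^{-1})+\phi_{I_g}(\ga;Y)\,\phi_{I_f}(\ga;X), \]
where I have used $\sym{k-2}(X)(\ga)\,\phi_{I_f}(\ga^{-1})=-\phi_{I_f}(\ga)$, the $\ga_2=\ga^{-1}$ instance of the $\phi_{I_f}$-cocycle relation. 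This is exactly the $(\ga_1,\ga_2)=(\ga,\ga^{-1})$ specialisation of the corner identity, using $\widetilde\psi_{f,g}(\bfone)=0$.

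It therefore remains to prove the corner identity for all $\ga_1,\ga_2$, which simultaneously yields well-definedness of $\widetilde\rho_{f,g}$ and the top-row transformation. I would prove it by the very same substitution-and-split, now applied to $\widetilde\psi_{f,g}(\ga_1\ga_2)=\int_{\ga_1\ga_2(i\infty)}^{i\infty}f(z)(X-z)^{k-2}I_g(z;Y)\,\d z$: splitting the contour at the cusp $\ga_1(i\infty)$ and substituting $z=\ga_1 w$ on the segment from $\ga_1\ga_2(i\infty)$ to $\ga_1(i\infty)$ reproduces the three right-hand terms, with the off-diagonal contribution $\phi_{I_g}(\ga_1;Y)\,\sym{k-2}(X)(\ga_1)\,\phi_{I_f}(\ga_2)$ arising exactly from the inhomogeneous term $\phi_{I_g}(\ga_1;Y)$ in the depth-one transformation of the inner integral $I_g$ (compare the path-composition formula for iterated integrals \cite{manin_iterated_2005}). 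This is the main obstacle: the depth-two, iterated nature forbids naive contour additivity, one must keep the two symmetric-power actions on $X$ and on $Y$ scrupulously separate, and one must check that the cross-term $\phi_{I_g}(\ga;Y)\,\phi_{I_f}(\ga;X)$ lands in the corner with the correct sign rather than cancelling.

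Finally I would confirm that $F$ is an honest element of $\rmM_0(\widetilde\rho_{f,g})$ and not merely a vector of functions with the right transformation behaviour. Holomorphy on $\HS$ is immediate, as each component is an iterated integral of holomorphic integrands. At the cusp $i\infty$ the cuspidality of $f$ and $g$ forces exponential decay of the integrands, guaranteeing convergence of $I_f$ and $I_{f,g}$ and the moderate growth required by the definition of $\rmM_0$ in Section~\ref{ssec:vector_valued_modular_forms}. The hypothesis that $k$ is even enters only to make $-\bfI$ act compatibly across the three blocks (and to avoid the vacuous case $\rmS_k=0$), so it needs no separate treatment.
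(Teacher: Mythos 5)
Your proof is correct and follows essentially the same route as the paper's: both rest on the change of variables $z=\ga w$ combined with the depth-one transformation of the inner integral $I_g(\,\cdot\,;Y)$, which yields the top-row transformation law for $I_{f,g}$ and the twisted cocycle identity for $\widetilde{\psi}_{f,g}$ (equation~\eqref{eq:psi_identity} in the paper), the other block identities being the standard depth-one facts. The only organizational difference is that the paper extracts~\eqref{eq:psi_identity} formally from the coboundary structure of $\ga\mapsto I_{f,g}\big|(1-\ga^{-1})$ computed in two ways, whereas you re-run the contour splitting for a product $\ga_1\ga_2$; the underlying computation is identical.
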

The relation between $\widetilde{\rho}_{f,g}$ and $\rho_{f,g}$, and also between
$I_{f,g}$ and $\cE_{f,g}$, is provided by the contraction map $\pi:\symd(X,Y)\to\CC$,
given by
\begin{gather}
  \label{eq:contraction}
  \pi(p\otimes q)
=
  \langle p,q\rangle\tx{,}\qquad p\in\CC[X]_\sfd\tx{,}\quad q\in\CC[Y]_\sfd\tx{.}
\end{gather}
In particular, we provide the following theorem.
\begin{maintheorem}
  \label{mainthm:pp_1_sym_1}
  Let $k\geq 2$ be an even integer, and let $f,g\in\rmS_k$. Then the arithmetic
  type $\rho_{f,g}$ is well-defined, and the contraction
  map~\eqref{eq:contraction} induces a pushforward morphism
  $\pi_\ast:\widetilde{\rho}_{f,g}\to\rho_{f,g}$ given by
  $\pi_\ast(p,q,z)=(\pi(p),q,z)$. Furthermore, we have that
  \begin{gather*}
    \pi_\ast\circ\begin{pmatrix}I_{f,g}\\I_f\\1\end{pmatrix}
  =
    \begin{pmatrix}\cE_{f,g}\\I_f\\1\end{pmatrix}
  \in
    \rmM_0(\rho_{f,g})\tx{.}
  \end{gather*}
\end{maintheorem}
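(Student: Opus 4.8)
The plan is to realise the passage from $\widetilde{\rho}_{f,g}$ to $\rho_{f,g}$ entirely through the single linear map $\pi_\ast$, so that both the well-definedness of $\rho_{f,g}$ and the modularity of the pushforward fall out once one checks that $\pi_\ast$ intertwines the two matrices. Concretely, I would define $\rho_{f,g}(\ga)$ by the matrix in~\eqref{eq:thin_rep} and verify the identity $\pi_\ast\circ\widetilde{\rho}_{f,g}(\ga)=\rho_{f,g}(\ga)\circ\pi_\ast$ for every $\ga\in\SL2\ZZ$. Writing $S(\ga)=\sym{k-2}(X)(\ga)$ and comparing block entries, the lower two components of this identity hold tautologically, since the bottom-right $2\times 2$ blocks of~\eqref{eq:fat_rep} and~\eqref{eq:thin_rep} agree and $\pi_\ast$ is the identity there. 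The content lies in the top component, which by linearity of $\pi$ splits, according to the three summands of $\CC[X,Y]_{k-2}\oplus\CC[X]_{k-2}\oplus\CC$, into three statements: first, $\pi\circ\sym{k-2}(X,Y)(\ga)=\pi$; second, $\pi\big(\phi_{I_g}(\ga;Y)\cdot S(\ga)q\big)=-\phi_{I_g}^\vee(\ga)(q)$ for all $q\in\CC[X]_{k-2}$; and third, $\pi\big(\widetilde{\psi}_{f,g}(\ga)\big)=\psi_{f,g}(\ga)$.

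The first identity is just $\SL2\ZZ$-invariance of the contraction: since $\sym{k-2}(X,Y)=\sym{k-2}(X)\otimes\sym{k-2}(Y)$ and the pairing satisfies $\langle S(\ga)p,\sym{k-2}(Y)(\ga)q\rangle=\langle p,q\rangle$, one gets $\pi\big(\sym{k-2}(X,Y)(\ga)(p\otimes q)\big)=\langle p,q\rangle=\pi(p\otimes q)$. For the second identity I would first rewrite the left-hand side as $\langle S(\ga)q,\,\phi_{I_g}(\ga;Y)\rangle$ and push the group element across the pairing by invariance to obtain $\langle q,\,\sym{k-2}(Y)(\ga^{-1})\phi_{I_g}(\ga;Y)\rangle$. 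The cocycle relation for the parabolic $\sym{k-2}$-cocycle $\phi_{I_g}$, specialised at $\ga_2=\ga^{-1}$ using $\phi_{I_g}(\id)=0$, reads (in the $Y$-variable) $\sym{k-2}(Y)(\ga^{-1})\phi_{I_g}(\ga;Y)=-\phi_{I_g}(\ga^{-1};Y)$; substituting this and invoking the symmetry of the pairing (valid because $k-2$ is even) turns the expression into $-\langle\phi_{I_g}(\ga^{-1}),q\rangle=-\phi_{I_g}^\vee(\ga)(q)$, as required.

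The third identity, and with it the final evaluation, rests on a single computation of the pairing. Pulling the fixed linear map $\pi$ inside the integral defining $\widetilde{\psi}_{f,g}(\ga)$ gives $\int_{\ga(i\infty)}^{i\infty}f(z)\,\langle(X-z)^{k-2},\,I_g(z;Y)\rangle\,\d{z}$, and expanding $I_g(z;Y)=\int_z^{i\infty}g(w)(Y-w)^{k-2}\d{w}$ reduces everything to the normalisation $\langle(X-z)^{k-2},(Y-w)^{k-2}\rangle=(z-w)^{k-2}$ of the invariant pairing from Section~\ref{sec:prelim}. This yields $\langle(X-z)^{k-2},I_g(z;Y)\rangle=\cE_g(z)$ and hence $\pi\big(\widetilde{\psi}_{f,g}(\ga)\big)=\psi_{f,g}(\ga)$; the very same manipulation applied to $I_{f,g}$ itself gives $\pi(I_{f,g})=\cE_{f,g}$.

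With the intertwining identity established, the remaining structural claims follow formally. Because $\pi_\ast$ is surjective and $\widetilde{\rho}_{f,g}$ is a homomorphism by Main Theorem~\ref{mainthm:fat_rep}, the chain $\rho_{f,g}(\ga_1)\rho_{f,g}(\ga_2)\pi_\ast=\rho_{f,g}(\ga_1)\pi_\ast\widetilde{\rho}_{f,g}(\ga_2)=\pi_\ast\widetilde{\rho}_{f,g}(\ga_1\ga_2)=\rho_{f,g}(\ga_1\ga_2)\pi_\ast$ forces $\rho_{f,g}(\ga_1)\rho_{f,g}(\ga_2)=\rho_{f,g}(\ga_1\ga_2)$, so $\rho_{f,g}$ is well-defined and $\pi_\ast$ is a morphism of arithmetic types. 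Since any morphism of types sends weight~$0$ modular forms to weight~$0$ modular forms, we get $\pi_\ast\circ(I_{f,g},I_f,1)\in\rmM_0(\rho_{f,g})$, and evaluating it as $(\pi(I_{f,g}),I_f,1)=(\cE_{f,g},I_f,1)$ completes the proof. The main obstacle is the sign-and-variable bookkeeping in the second identity, where the minus sign in the $(1,2)$-entry of~\eqref{eq:thin_rep} is produced precisely by the interplay of the cocycle relation with the symmetry of the pairing; I also expect that reading off the $(1,3)$-entry consistency $\psi_{f,g}(\ga_1\ga_2)=\psi_{f,g}(\ga_1)+\psi_{f,g}(\ga_2)-\phi_{I_g}^\vee(\ga_1)\big(\phi_{I_f}(\ga_2)\big)$, which now holds automatically, is what recovers the Pa\c{s}ol--Popa orthogonality relation advertised in the introduction.
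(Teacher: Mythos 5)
Your proof is correct, but it routes the argument differently from the paper in two places worth noting. The paper splits the statement into Theorems~\ref{theo:scalar_eichler_shimura} and~\ref{theo:polynomial_to_scalar_eichler_shimura}: it first proves well-definedness of $\rho_{f,g}$ and the modularity of $(\cE_{f,g},I_f,1)^T$ \emph{directly}, by a change-of-variables computation establishing the cocycle identity for $\psi_{f,g}$ in parallel with the polynomial-valued case; you instead deduce both from the single intertwining identity $\pi_\ast\circ\widetilde{\rho}_{f,g}(\ga)=\rho_{f,g}(\ga)\circ\pi_\ast$ together with surjectivity of $\pi_\ast$, which is a legitimate and more economical shortcut (note that invertibility of $\rho_{f,g}(\ga)$ is automatic from the block-triangular shape). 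Second, for the identification of the top components the paper only verifies the off-diagonal intertwining $\pi\circ(\phi_{I_g}\cdot\sym{k-2}(X))=-\phi_{I_g}^\vee$ and then argues indirectly: the difference $\pi\circ I_{f,g}-\cE_{f,g}$ lies in $\rmM_0$, hence is a constant, and vanishes because $f,g$ are cuspidal. You instead compute the normalization $\langle(X-z)^{k-2},(Y-w)^{k-2}\rangle=(w-z)^{k-2}=(z-w)^{k-2}$ explicitly (correct, using that $k-2$ is even) and obtain $\pi\circ I_{f,g}=\cE_{f,g}$ and $\pi\circ\widetilde{\psi}_{f,g}=\psi_{f,g}$ pointwise. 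Your direct evaluation does all the work in one identity and in fact yields the paper's conclusion without appealing to the vanishing-constant argument; the paper's route avoids ever needing the explicit normalization of the pairing. Both arguments are sound, and your sign bookkeeping in the $(1,2)$-entry (cocycle relation $\ga^{-1}.\phi_{I_g}(\ga)=-\phi_{I_g}(\ga^{-1})$ plus symmetry of the pairing) matches the paper's computation exactly.
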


We next introduce a family of representations $\rho_{\phi_1,\phi_2}$. We
construct its members from pairs of parabolic $\symd(X)$-cocycles
$(\phi_1,\phi_2)$ satisfying a certain orthogonality relation, see
Section~\ref{ssec:the_ext} and specifically Theorem~\ref{theo:ortho_cond}.
Theorem~\ref{mainthm:pp_1_sym_1} implies that $\rho_{\phi_{I_f},-\phi_{I_g}}$ is
well-defined and equals~$\rho_{f,g}$.

\begin{mainremark}
Our characterization of pairs~$(\phi_1, \phi_2)$ for which~$\rho_{\phi_1,\phi_2}$ is well-defined in conjunction with our result that~$\rho_{f,g}$ is well-defined 
implies that~$\phi_{I_f}$ and~$\phi_{I_g}$ satisfies the aforementioned
orthogonality relation. This relation between cocycles has previously been obtained
by Pa\c{s}ol-Popa~\cite{pasol_modular_2013} in the language of period polynomials.
Theorem~\ref{mainthm:pp_1_sym_1} thus gives an alternate explanation of
Pa\c{s}ol-Popa's result in level~$1$. The case of general levels treated by
Pa\c{s}ol-Popa can be incorporated into our setting by introducing induced representations.
\end{mainremark}

In Section~\ref{sec:eis_sat}, we provide a framework
based on vector-valued Eisenstein series
that allows for effective
computation of modular forms of type $\rho_{\phi_1,\phi_2}$, that is, of forms
that transform like the representation~$\rho_{\phi_1,\phi_2}$
(see Section~\ref{ssec:the_ext} for the definition).
This builds upon the framework
developed in~\cite{ahlback_eichler_2022},
and in particular enables us to evaluate~$\cE_{f,g}$.

To prepare for the setup of this framework, we record that the representation $\rho_{\phi_1,\phi_2}$ features a function $\psi:\SL2\ZZ\to\CC$
that behaves similar to a $1$-cocycle. Specifically, it satisfies
\begin{gather*}
\begin{aligned}
  \psi(\ga_1\ga_2)
&=
  \psi(\ga_1)+\psi(\ga_2)+\big\langle\phi_2(\ga^{-1}),\phi_1(\ga)\big\rangle
\tx{,}\quad\ga_1,\ga_2\in\SL2\ZZ
\tx{,}
\\
  \psi(S) &= -\tfrac{1}{2} \big\langle\phi_2(S^{-1}),\phi_1(S)\big\rangle
\tx{,}\quad
  \psi(T) = 0
\tx{.}
\end{aligned}
\end{gather*}
We also remark that in the case of $\phi_1 = \phi_{I_f}$ and~$\phi_2 = -\phi_{I_g}$, this function
coincides with $\psi_{f,g}$.

Let $k$ be an integer, and let $(\phi_1,\phi_2)$ be a pair of parabolic cocycles
for which $\rho_{\phi_1,\phi_2}$ is well-defined. Then we define the vector-valued
Eisenstein series of weight $k\in\ZZ$, $k > 2 + \sfd$ and type $\rho_{\phi_1,\phi_2}$ by
\begin{gather}
  E_k(\tau;\rho_{\phi_1,\phi_2})
=
  \big( E_k^{[2]}(\tau;\phi_1,\phi_2),\;E_k^{[1]}(\tau;\phi_1),\;E_k \big)^T\tx{,}
\end{gather}
where $E_k$ is the classical Eisenstein series of weight $k$, and where
\begin{gather*}
  E_k^{[1]}(\tau;\phi_1)
=
  \sum_{[\ga]\in\Ga_\infty\bs\SL2\ZZ}\frac{\phi_1(\ga^{-1})}{(c\tau+d)^k}
\quad\tx{and}\quad
  E_k^{[2]}(\tau;\phi_1,\phi_2)
=
  \sum_{[\ga]\in\Ga_\infty\bs\SL2\ZZ}\frac{\psi(\ga^{-1})}{(c\tau+d)^k}\tx{.}
\end{gather*}
The series $E_k^{[1]}(\cdot;\phi_1)$ is called the generalized second order
Eisenstein series of type $(\symd(X),\bfone)$ associated to $(\phi_1,1)$ and was
a subject of study in~\cite{ahlback_eichler_2022}, in which it was shown that it
converges absolutely and locally uniformly on $\HS$ for $k>2+\sfd$, and where its
Fourier series expansion was provided. As for $E_k^{[2]}(\tau;\phi_1,\phi_2)$,
we show that it converges in the same region and provide its Fourier series expansion
in Theorem~\ref{theo:eis_conv_fe}.

To state our last main theorem, we write $\rmM_\bullet$ for the graded ring of
modular forms, and given a representation $\rho$, $\rmM_\bullet(\rho)$ for the
corresponding graded $\rmM_\bullet$-module of modular forms of type $\rho$.
Furthermore, if $M$ is a $\rmM_\bullet$-module,
$I\subseteq M$ is a submodule, and $f\in \rmM_\bullet$, then the saturation of
$I$ at $f$ is the $\rmM_\bullet$-module
\begin{gather*}
  (I:f^\infty)
=
  \big\{g\in M:\exists n\in\ZZ_{\geq 0}.\, f^ng\in I\big\}
\tx{.}
\end{gather*}
Recall also that given a parabolic $\symd(X)$-cocycle $\phi$, we have the representation
$\bfone\bp_{\phi^\vee}\symd(X):\SL2\ZZ\to\GL{\CC\oplus\CC[X]_\sfd}$ given by
\begin{gather*}
  (\bfone\bp_{\phi^\vee}\symd(X))(\ga)
=
  \begin{pmatrix}
    1 & \big\langle\phi(\ga^{-1}),\;\cdot\;\big\rangle\\
    0 & \symd(X)(\ga)
  \end{pmatrix}\tx{.}
\end{gather*}
We provide the following theorem.
\begin{maintheorem}
  \label{mainthm:sat}
  Let $\sfd\geq 0$ and $k_0>2+\sfd$ be integers and let
  $\phi_1,\phi_2$ be parabolic $\symd(X)$-cocycles making $\rho_{\phi_1,\phi_2}$
  well-defined (see Theorem~\ref{theo:ortho_cond}). Let also
  \begin{gather*}
    \rmE_{\geq k_0}(\rho_{\phi_1,\phi_2})
  =
    \mathrm{span}\;\rmM_\bullet\big\{E_k(\tau;\rho_{\phi_1,\phi_2})\,:\,k\geq k_0\big\}\tx{.}
  \end{gather*}
  Then
  \begin{gather*}
    \rmM_\bullet(\rho_{\phi_1,\phi_2})
  =
    \big(\rmE_{\geq k_0}(\rho_{\phi_1,\phi_2})
    +
    \iota\big(\rmM_\bullet(\bfone\boxplus_{\phi_2^\vee}\symd(X))\big)
    \colon
    \Delta^\infty\big)
  \end{gather*}
  where $\iota(f,g)=(f,g,0)^T$.
\end{maintheorem}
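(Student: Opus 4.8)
The plan is to prove the two inclusions separately. Write $M:=\rmM_\bullet(\rho_{\phi_1,\phi_2})$ and $I:=\rmE_{\geq k_0}(\rho_{\phi_1,\phi_2})+\iota\big(\rmM_\bullet(\bfone\boxplus_{\phi_2^\vee}\symd(X))\big)$. The inclusion $\supseteq$ is immediate, since by definition the saturation $(I:\Delta^\infty)$ is a submodule of $M$. So the content is $M\subseteq(I:\Delta^\infty)$, i.e.\ that for every $F\in M$ there is an integer $n\geq 0$ with $\Delta^n F\in I$. Throughout I use that $\rmE_{\geq k_0}(\rho_{\phi_1,\phi_2})\subseteq M$, that is, that the vector-valued Eisenstein series $E_k(\tau;\rho_{\phi_1,\phi_2})$ are genuine holomorphic modular forms of type $\rho_{\phi_1,\phi_2}$ for $k\geq k_0>2+\sfd$; this is the convergence and modularity assertion of Theorem~\ref{theo:eis_conv_fe}.

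The first step is to record the short exact sequence coming from the block-upper-triangular shape of $\rho_{\phi_1,\phi_2}$. Since its bottom-right entry is $1$, projection onto the last coordinate, $p:M\to\rmM_\bullet$, $p\,(F_0,F_1,F_2)^T=F_2$, is a weight-preserving $\rmM_\bullet$-module homomorphism into the scalar modular forms. Its kernel consists of the forms with $F_2=0$; for such a form the pair $(F_0,F_1)$ transforms under the top-left $2\times2$ block of $\rho_{\phi_1,\phi_2}$, which is exactly $\bfone\boxplus_{\phi_2^\vee}\symd(X)$. Hence $\ker p=\iota\big(\rmM_\bullet(\bfone\boxplus_{\phi_2^\vee}\symd(X))\big)$ (in particular $\iota$ really maps into $M$), and applying $p$ to the generators of $\rmE_{\geq k_0}(\rho_{\phi_1,\phi_2})$ gives $p\big(E_k(\tau;\rho_{\phi_1,\phi_2})\big)=E_k$, so that $J:=p\big(\rmE_{\geq k_0}(\rho_{\phi_1,\phi_2})\big)=\mathrm{span}\,\rmM_\bullet\{E_k:k\geq k_0\}$.

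Granting the scalar saturation statement $(J:\Delta^\infty)=\rmM_\bullet$ (discussed below), the reduction is then purely formal. Let $F\in M$ have weight $w$; its bottom component $F_2\in\rmM_w$ is an ordinary scalar modular form, so there is an $n$ with $\Delta^n F_2\in J$. As $p$ is graded and surjects onto $J$ from $\rmE_{\geq k_0}(\rho_{\phi_1,\phi_2})$, choose a homogeneous $G\in\rmE_{\geq k_0}(\rho_{\phi_1,\phi_2})$ of weight $w+12n$ with $p(G)=\Delta^n F_2=p(\Delta^n F)$. Then $\Delta^n F-G\in M$ has vanishing bottom component, hence lies in $\ker p=\iota\big(\rmM_\bullet(\bfone\boxplus_{\phi_2^\vee}\symd(X))\big)$. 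Therefore $\Delta^n F=G+(\Delta^n F-G)\in \rmE_{\geq k_0}(\rho_{\phi_1,\phi_2})+\iota\big(\rmM_\bullet(\bfone\boxplus_{\phi_2^\vee}\symd(X))\big)=I$, which completes the reduction.

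The one genuinely non-formal input, and the step I expect to be the main obstacle, is the scalar saturation $(J:\Delta^\infty)=\rmM_\bullet$. Here $J$ is the ideal of $\rmM_\bullet=\CC[E_4,E_6]$ generated by $\{E_k:k\geq k_0\}$, and since $\rmM_\bullet$ is $\Delta$-torsion-free it suffices to produce an $N$ with $\Delta^N\in J$; as $J$ is an $\rmM_\bullet$-submodule this forces $\Delta^N\rmM_\bullet\subseteq J$ and hence $(J:\Delta^\infty)=\rmM_\bullet$. Equivalently one must show $\Delta\in\sqrt J$, i.e.\ that the Eisenstein family $\{E_k\}_{k\geq k_0}$ has no common zero on $\HS$, its only common zero in $\mathrm{Spec}\,\CC[E_4,E_6]$ being the origin, which lies on $\{\Delta=0\}$. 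This is precisely the $\sfd=0$, trivial-cocycle base case of the statement being proved, and is the scalar instance of the framework of~\cite{ahlback_eichler_2022}; concretely it follows by eliminating, using relations such as $E_8=E_4^2$ and $E_{12}\in\CC E_4^3+\CC E_6^2$, to force both $E_4$ and $E_6$ to vanish on the common zero locus, so that $\Delta$ does too.
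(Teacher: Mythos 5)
Your argument is essentially the paper's: both proofs rest on the exact sequence
$0\to\iota\big(\rmM_\bullet(\bfone\bp_{\phi_2^\vee}\symd(X))\big)\to\rmM_\bullet(\rho_{\phi_1,\phi_2})\xrightarrow{\;p\;}\rmM_\bullet$
coming from the block-triangular shape of $\rho_{\phi_1,\phi_2}$, together with the fact that the scalar Eisenstein series of weight $\geq k_0$ generate $\rmM_\bullet$ up to $\Delta$-saturation. The paper packages this as a commutative diagram with exact rows and an application of the Four Lemma, and actually proves the refinement Theorem~\ref{theo:sat}, in which the depth-one part $\rmM_\bullet(\bfone\bp_{\phi_2^\vee}\symd(X))$ is itself replaced by its Eisenstein description via~\eqref{eq:old_thm_vvmf}; your version is the same chase carried out on elements, which is fine and arguably more transparent for the statement as given.

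The one step you should not claim to have proved is the scalar saturation $\big(\mathrm{span}\,\rmM_\bullet\{E_k:k\geq k_0\}:\Delta^\infty\big)=\rmM_\bullet$. Your reduction to ``the family $\{E_k\}_{k\geq k_0}$ has no common zero on $\HS$'' is the right reformulation, but the concrete elimination you offer uses the relations $E_8=E_4^2$ and $E_{12}\in\CC E_4^3+\CC E_6^2$, and these involve Eisenstein series that lie in your ideal $J$ only when $k_0\leq 8$ (resp.\ $k_0\leq 12$); for general $k_0>2+\sfd$ the ideal contains no Eisenstein series of small weight, so this elimination does not go through. The absence of common zeros of $\{E_k\}_{k\geq k_0}$ on $\HS$ for arbitrary $k_0$ is genuinely nontrivial, and the paper likewise imports it from~\cite{ahlback_eichler_2022} rather than reproving it --- it appears as the equality $(\rmE_{\geq k_0}:\Delta^\infty)=\rmM_\bullet$ in the right-hand column of the paper's diagram. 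So cite that input rather than sketching it; with that substitution your proof is complete.
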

Since $\rmM_\bullet(\bfone\bp_{\phi_2^\vee}\symd(X))$ is described
in~\cite{ahlback_eichler_2022} as the saturation at $\Delta$ of generalized
second order Eisenstein series and classical modular forms, Theorem~\ref{mainthm:sat}
indeed implies that modular forms of type $\rho_{\phi_1,\phi_2}$ can be expressed
in terms of sums of products of (vector-valued) Eisenstein series with classical
modular forms, after multiplication with a suitable power of $\Delta$.
%%%%%%%%%%%%%%%%%%%%%%%%%%%%%%%%%%%%%%%%%%%%%%%%%%
%%% PAPER BODY

\section{Preliminaries}
\label{sec:prelim}
In this section, we define the notation we use throughout the paper, and revisit
the basic theory of Eichler cohomology, extensions of arithmetic types, and vector-valued
modular forms. For further details, we direct the reader to~\cite{westerholt-raum_products_2017} and~\cite{mertens_modular_2021}.

The special linear group of degree two over the integers, is given by
\begin{gather*}
  \SL2\ZZ=\Big\{\begin{pmatrix}a&b\\c&d\end{pmatrix}\in\ZZ^{2\times2}\,:\,ad-bc=1\Big\}\tx{.}
\end{gather*}
It is a fact that $\SL2\ZZ$ is generated by the matrices $S=(\begin{smallmatrix}0&-1\\1&0\end{smallmatrix})$
and $T=(\begin{smallmatrix}1&1\\0&1\end{smallmatrix})$. The parabolic subgroup
$\Ga_\infty\subseteq\SL2\ZZ$ is given by $\Ga_\infty=\langle T,-1\rangle$. We also let $U=TS$.

The upper-half plane $\HS$ is given by $\HS=\{\tau\in\CC:\Im(\tau)>0\}$. For
$\ga=\big(\begin{smallmatrix}a&b\\c&d\end{smallmatrix}\big)\in\SL2\ZZ$ and $\tau\in\HS$,
the Möbius action is given by
\begin{gather*}
  \ga\tau=\frac{a\tau+b}{c\tau+d}\tx{.}
\end{gather*}

Given a $\CC$-vector space $V$, a function $f:\HS\to V$, an integer $k$, and an element
$\ga=\big(\begin{smallmatrix}a&b\\c&d\end{smallmatrix}\big)\in\SL2\ZZ$, we define
a new function $f|_k\ga:\HS\to V$ by
\begin{gather*}
  \big(f\big|_k\ga\big)(\tau)=(c\tau+d)^{-k}f(\ga\tau)\tx{.}
\end{gather*}
We record that this gives rise to a right-action of $\SL2\ZZ$ on the space of
holomorphic functions from $\HS$ to $V$, which we call the slash-action.

Let $V$ be a $\CC$-vector space with a norm $\|\cdot\|$, and let $f:\HS\to V$ be
a function. If there exists a real number $a\in\RR$ such that for all $\ga\in\SL2\ZZ$
it holds uniformly in $\Re(\tau)$ that
\begin{gather}
  \label{eq:moderate_growth}
  \big\|\big(f\big|_k\ga\big)(\tau)\big\|=O(\Im(\tau)^a)\tx{ as }\Im(\tau)\to\infty\tx{,}
\end{gather}
we say that $f$ has moderate growth. If instead we have that for all $\ga\in\SL2\ZZ$ it
holds that
\begin{gather}
  \label{eq:cuspidal}
  \big\|\big(f\big|_k\ga\big)(\tau)\big\|\to 0\tx{ as }\Im(\tau)\to\infty\tx{,}
\end{gather}
we say that $f$ is cuspidal. Note that the conditions~\eqref{eq:moderate_growth}
and~\eqref{eq:cuspidal} are independent of the choice of norm on $V$.

Let $k$ be an integer. Then if $f:\HS\to\CC$ is a holomorphic function of moderate
growth satisfying that
\begin{gather*}
  f\big|_k\ga=f\tx{ for all }\ga\in\SL2\ZZ\tx{,}
\end{gather*}
we call $f$ a (scalar-valued) modular form of weight $k$. If $f$ is cuspidal, it is called a cusp
form. The set of modular forms of weight $k$ forms a $\CC$-vector space
denoted by $\rmM_k$. The corresponding subspace of cusp forms is denoted by $\rmS_k$. If $f\in\rmM_k$,
we let the conjugate modular form $f^c$ be given by
\begin{gather*}
  f^c(\tau)=\ol{f(-\ol{\tau})}\tx{.}
\end{gather*}
Note that if $f$ has the Fourier series expansion $f(\tau)=\sum_{n\geq 0}c_ne(n\tau)$,
then $f^c$ has the Fourier series expansion $f^c(\tau)=\sum_{n\geq 0}\ol{c_n}e(n\tau)$.

Finally, we denote the trivial representation of $\SL2\ZZ$ by $\bfone$, so that
$V(\bfone)=\CC$ and $\bfone(\ga)z=z$ for all $\ga\in\SL2\ZZ$ and $z\in\CC$.
\subsection{Cohomology and extensions of arithmetic types}

An arithmetic type is a finite-dimensional complex representation
of a subgroup $\Ga\subseteq\SL2\ZZ$. In the present paper, we restrict our scope
to arithmetic types of $\SL2\ZZ$.

If $\rho$ and $\sigma$ are arithmetic types, the $\CC$-vector space of $(\rho,\sigma)$-cocycles
is given by
\begin{gather*}
  \rmZ^1(\rho,\sigma)
=
  \big\{f:\SL2\ZZ\to\Hom(V(\rho),V(\sigma))\,:\,
  f(\ga_1\ga_2)=\sigma(\ga_1)f(\ga_2)+f(\ga_1)\rho(\ga_2)\big\}\tx{,}
\end{gather*}
and the subspace of $(\rho,\sigma)$-coboundaries is given by
\begin{gather*}
  \rmB^1(\rho,\sigma)
=
  \big\{f:\SL2\ZZ\to\Hom(V(\rho),V(\sigma))\,:\, 
  \exists h\in\Hom(V(\rho),V(\sigma)).\,f(\ga)=\sigma(\ga)h-h\rho(\ga)\big\}\tx{.}
\end{gather*}
If a cocycle $\phi\in\rmZ^1(\rho,\sigma)$ vanishes on every element of $\Ga_\infty$,
we call it parabolic. The space of all parabolic cocycles (or coboundaries)
is denoted by $\rmZ^1_\tx{pb}(\rho,\sigma)$ (or $\rmB^1_\tx{pb}(\rho,\sigma)$).
We will identify $\Hom(\bfone,V(\sigma))$ with $V(\sigma)$.

Given a parabolic cocycle $\phi\in\rmZ^1_\tx{pb}(\sigma,\rho)$, we define the arithmetic
type $\rho\bp_\phi\sigma$ by $V(\rho\bp_\phi\sigma)=V(\rho)\oplus V(\sigma)$ and
\begin{gather}
  (\rho\bp_\phi\sigma)(\ga)(v,v')=\big(\rho(\ga)v+\phi(\ga)v',\sigma(\ga)v'\big)\tx{.}
\end{gather}

The first parabolic cohomology group is the quotient
$\rmH^1_\tx{pb}(\rho,\sigma)=\rmZ^1_\tx{pb}(\rho,\sigma)/\rmB^1_\tx{pb}(\rho,\sigma)$. We record
that $\rmH^1_\tx{pb}(\sigma,\rho)$ is isomorphic to the group of parabolic extension
classes of $\sigma$ by $\rho$, denoted by $\Ext_\tx{pb}(\sigma,\rho)$. In particular,
the following map is a well-defined isomorphism of groups
\begin{gather*}
  \Ext_\tx{pb}(\sigma,\rho)
  \ni
  [0\to\rho\to\rho\bp_\phi\sigma\to\sigma\to0]
\lmto
  \phi+\rmB^1_\tx{pb}(\sigma,\rho)
  \in
  \rmH^1_\tx{pb}(\sigma,\rho)\tx{.}
\end{gather*}
In particular this means that given cocycles $\phi_1$ and $\phi_2$, it holds that
$\rho\bp_{\phi_1}\sigma$ is isomorphic to $\rho\bp_{\phi_2}\sigma$ if and only if
$\phi_1$ and $\phi_2$ are cohomologous.

Note that by the cocycle relations, an arbitrary cocycle $\phi\in\rmZ^1_\tx{pb}(\rho,\sigma)$
is fully determined by its value at $S$.

\subsection{Vector-valued modular forms}%
\label{ssec:vector_valued_modular_forms}

Let $k$ be an integer and let $\rho$ an be arithmetic type. Given a function $f:\HS\to V(\rho)$
and an element $\ga\in\SL2\ZZ$, we define $f\big|_k\ga:\HS\to V(\rho)$ by
\begin{gather*}
  \big(f\big|_{k,\rho}\ga\big)(\tau)=\rho(\ga^{-1})\big(f\big|_k\ga\big)(\tau)\tx{.}
\end{gather*}
A vector-valued modular form of type $\rho$ and weight $k$, is a holomorphic function
of moderate growth $f:\HS\to V(\rho)$ satisfying that
\begin{gather*}
  f\big|_{k,\rho}\ga=f\tx{ for all }\ga\in\SL2\ZZ\tx{.}
\end{gather*}
If a vector-valued modular form $f$ of type $\rho$ and weight $k$ is cuspidal,
we call it a cusp form. The space of vector-valued modular forms of weight $k$
and type $\rho$ is denoted by $\rmM_k(\rho)$. The corresponding subspace of
cusp forms is denoted by $\rmS_k(\rho)$.

We remark that scalar-valued modular forms of weight $k$ are the same
as vector-valued modular forms of type $\bfone$. That is, we have
the equalities $\rmM_k(\bfone)=\rmM_k$ and $\rmS_k(\bfone)=\rmS_k$.

For an arithmetic type $\rho$ we let the graded module of modular forms of type
$\rho$ be given by
\begin{gather*}
  {\rmM_\bullet(\rho)=\bigoplus_{k\in\ZZ}\rmM_k(\rho)}\tx{.}
\end{gather*}
If $M$ is an $\rmM_\bullet$-module, $I\subseteq M$ is a submodule of $M$, and
$f\in\rmM_\bullet$ then we recall that the saturation of $I$ at $f$ is given by
\begin{gather*}
  (I:f^\infty)=\big\{g\in M:\exists n\in\ZZ_{\geq 0}.\, f^ng\in I\big\}\tx{.}
\end{gather*}
\subsection{Symmetric powers}
Let $\sfd\geq 0$ be an integer. Then we let $\CC[X]_\sfd$ be the space of polynomials
with coefficients in $\CC$ of degree at most $\sfd$. We define the arithmetic type
$\symd(X)$ by $V(\symd(X))=\CC[X]_\sfd$, and
\begin{gather*}
  \symd(X)(\ga)p=p\big|_{-\sfd}\ga^{-1}=(-cX+a)^\sfd p\big(\mfrac{dX-b}{-cX+a}\big)
  \tx{,}\quad\tx{where }
  \ga=\big(\begin{smallmatrix}a&b\\c&d\end{smallmatrix}\big)\in\SL2\ZZ
  \tx{ and }
  p\in\CC[X]_\sfd\tx{.}
\end{gather*}
We remark that $\symd(X)$ is a model of the $\sfd$th symmetric power of the standard
representation of $\SL2\ZZ$, explaining the notation. The group ring
$\CC[\SL2\ZZ]$ acts linearly on $\CC[X]_\sfd$ by
\begin{gather*}
  \ga.p=\symd(X)(\ga)p\tx{ and }(c_1\ga_1+c_2\ga_2)p=c_1(\ga_1.p)+c_2(\ga_2.p)\tx{,}
\end{gather*}
where $\ga,\ga_1,\ga_2\in\SL2\ZZ$ and $c_1,c_2\in\CC$. As mentioned in the introduction
there exists a symmetric pairing $\langle\,\cdot\,,\,\cdot\,\rangle:\CC[X]_\sfd\times\CC[Y]_\sfd\to\CC$ given by
\begin{gather*}
  \langle p,q\rangle=\sum_{i=0}^\sfd(-1)^i\mbinom{\sfd}{i}^{-1}p_i q_{\sfd-i}\tx{,}
\end{gather*}
satisfying $\langle\ga.p,\ga.q\rangle=\langle p,q\rangle$ for any $p\in\CC[X]_\sfd$, $q\in\CC[Y]_\sfd$,
and $\ga\in\SL2\ZZ$. Since $\langle\,\cdot\,,\,\cdot\,\rangle$ is invariant and bilinear, we
have an equivariant contraction map $\pi:\CC[X,Y]_\sfd\to\CC$ given
by $\pi(p\otimes q)=\langle p,q\rangle$. There is also a related antisymmetric bilinear form
$\llangle\,\cdot\,,\,\cdot\,\rrangle:\CC[X]_\sfd^2\to\CC$ given by
\begin{gather*}
  \llangle p,q\rrangle=\big\langle T^{-1}.p-T.p,\,q\big\rangle\tx{.}
\end{gather*}
If $p = \sum p_i X^i \in\CC[X]_\sfd$, then we let $\ol{p} = \sum \ov{p_i} X^i \in\CC[X]_\sfd$.
Note that the action of~$\SL{2}{\ZZ}$ commutes with conjugation, so that $\ga.\ol{p}=\ol{\ga.p}$.

Henceforth, we will identify $\CC[X]_\sfd\otimes\CC[Y]_\sfd$ with $\CC[X,Y]_\sfd$;
the space of polynomials in $X$ and $Y$ of degree at most $\sfd$ in $X$ and $Y$. Furthermore,
we will use the shorthand notation $\symd(X,Y):=\symd(X)\otimes\symd(Y)$. This
coincides with the definition we used in the introduction.

%As for the associated vector-valued modular forms, we use the shorthand notation
%\begin{gather*}
%  \rmM_{k,\sfd}=\rmM_k(\symd(X))\tx{ and }\rmS_{k,\sfd}=\rmS_k(\symd(X))\tx{,}
%\end{gather*}
%given that the indeterminate $X$ is understood from the context.
%
\subsection{The Eichler-Shimura isomorphism}
We shall now briefly describe the Eichler-Shimura isomorphism between scalar-valued
cusp forms of weight $k\geq 2$ and $\rmZ^1_\tx{pb}(\bfone,\sym{k-2}(X))$. This exposition
follows~\cite{kohnen_modular_1984}. It holds for arbitrary integers~$\sfd \ge 0$, but the case relevant in this paper is~$\sfd = k - 2$.

The space of parabolic $(\bfone,\symd(X))$-cocycles can be completely
described as follows:
\begin{gather*}
  \rmZ^1_\tx{pb}\big(\bfone,\symd(X)\big)
=
  \big\{\phi:\SL2\ZZ\to\CC[X]_\sfd\,:\,
  \phi(-I)=\phi(T)=0\tx{ and }(1+S).\phi(S)=(1+U+U^2).\phi(S)=0\big\}\tx{,}
\end{gather*}
Note that $\rmZ^1_\tx{pb}\big(\bfone,\symd(X)\big)$ is closed under complex conjugation.
The space $\CC[X]_\sfd$ splits up into its ``even part'' and ``odd part''. That is, we have~$\CC[X]_\sfd=\CC[X]_\sfd^+\oplus\CC[X]_\sfd^-$ with
\begin{align*}
  \CC[X]_\sfd^+&=\big\{\textstyle\sum_{i=0}^\sfd p_iX^i\in\CC[X]_\sfd:p_{2j+1}=0\tx{ for }0\leq 2j+1\leq\sfd\big\}\\
  \CC[X]_\sfd^-&=\big\{\textstyle\sum_{i=0}^\sfd p_iX^i\in\CC[X]_\sfd:p_{2j}=0\tx{ for }0\leq 2j\leq\sfd\big\}
\tx{.}
\end{align*}
We further write
\begin{gather*}
  W_\sfd=\rmZ^1_\tx{pb}\big(\bfone,\symd(X)\big)(S)\tx{,}\quad
  W_\sfd^+=W_\sfd\cap\CC[X]_\sfd^+\tx{,}\quad\tx{and}\quad
  W_\sfd^-=W_\sfd\cap\CC[X]_\sfd^-\tx{.}
\end{gather*}
Let $f\in\rmS_k$ be a cusp form of weight $k\in\ZZ_{\geq 2}$. Then we define
the polynomial-valued Eichler integral of $f$ as
\begin{gather*}
  I_f(\tau;X)=\int_\tau^{i\infty}f(z)\,(X-\tau)^{k-2}\d{z}\tx{,}\quad\tx{where }\tau\in\HS\tx{.}
\end{gather*}
If the variable is understood from context, it is omitted from the notation. Note
that since $f$ vanishes at the cusp, $I_f(\tau)$ is well-defined. We also
see that for $\tau\in\HS$ we have that $I_f(\tau)\in\CC[X]_\sfd$. Given $\ga\in\SL2\ZZ$,
we let
\begin{gather*}
  \phi_{I_f}(\ga)=I_f\big|_{0,\sym{k-2}(X)}(1-\ga^{-1})=\int_{\ga(i\infty)}^{i\infty}f(z)\,(X-\tau)^{k-2}\d{z}\tx{.}
\end{gather*}
We have that $\phi_{I_f}\in\rmZ^1\big(\bfone,\sym{k-2}(X)\big)$ by construction, and
since elements of $\Ga_\infty$ stabilize the cusp we also have that
$\phi_{I_f}\in\rmZ^1_\tx{pb}\big(\bfone,\sym{k-2}(X)\big)$. We now set $r_f=\phi_{I_f}(S)\in W_{k-2}$ and
\begin{gather*}
  r_{f,n}
=
  \int_0^\infty f(i\!t)t^n\d{t}=\Ga(n+1)(2\pi)^{-n-1}\,\rmL(f,n+1)
\quad\tx{for }
  0\leq n\leq k-2\tx{,}
\end{gather*}
where $\rmL(f,\;\cdot\;)$ denotes the Hecke $\rmL$-function associated to $f$. By
the binomial theorem, we see that
\begin{gather*}
  r_f=\sum_{n=0}^{k-2}i^{-n+1}\mbinom{k-2}{n}r_n(f)X^{k-2-n}\tx{.}
\end{gather*}
Hence, we define $r^+_f\in W_{k-2}^+$ and $r^-_f\in W_{k-2}^-$ by
\begin{gather*}
  r^+_f=\sum_{\substack{0\leq n\leq k-2\\2\mid n}}(-1)^{n/2}\mbinom{k-2}{n}r_{f,n}X^{k-2-n}\quad\tx{and}\quad
  r^-_f=\sum_{\substack{0\leq n\leq k-2\\2\nmid n}}(-1)^{(n-1)/2}\mbinom{k-2}{n}r_{f,n}X^{k-2-n}\tx{,}
\end{gather*}
so that $r=r^-+ir^+$. We can now provide the Eichler-Shimura isomorphism.
\begin{untheorem}[Eichler-Shimura]
It holds that the maps
\begin{align*}
  \rmS_k\ni f&\mapsto r^-_f\in W^-_{k-2}\tx{ and}\\
  \rmS_k\oplus\CC\ni (f,z)&\mapsto r^+_f+z(X^{k-2}-1)\in W^+_{k-2}
\end{align*}
are isomorphisms of $\CC$-vector spaces.
\end{untheorem}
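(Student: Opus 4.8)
The plan is to follow the classical route of Eichler, Shimura, and Kohnen--Zagier~\cite{kohnen_modular_1984}: first check that the two maps are well-defined, then prove injectivity by means of a Haberland-type identity tying period polynomials to the Petersson inner product, and finally deduce surjectivity from a dimension count.

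For well-definedness, note that $r_f=\phi_{I_f}(S)$ lies in $W_{k-2}$ because $\phi_{I_f}$ is a parabolic $\sym{k-2}(X)$-cocycle, and the displayed decomposition $r_f=r_f^-+ir_f^+$ is exactly the splitting of $r_f$ into its odd- and even-degree homogeneous parts. It therefore suffices to see that $W_{k-2}$ respects the parity grading, i.e.\ that $W_{k-2}=W_{k-2}^+\oplus W_{k-2}^-$. This I would obtain from the outer automorphism of $\SL2\ZZ$ given by conjugation with $\mathrm{diag}(-1,1)$: it sends $S\mapsto S^{-1}$ and $T\mapsto T^{-1}$, hence preserves $\Ga_\infty$ and so carries $\rmZ^1_\tx{pb}(\bfone,\sym{k-2}(X))$ into itself, while on $\CC[X]_{k-2}$ it acts by $p(X)\mapsto p(-X)$, whose eigenspaces are precisely $\CC[X]_{k-2}^\pm$. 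Thus $r_f^\pm\in W_{k-2}^\pm$. A short direct computation also identifies the parabolic coboundaries as $\rmB^1_\tx{pb}(\bfone,\sym{k-2}(X))(S)=\CC\,(X^{k-2}-1)\subseteq W_{k-2}^+$, confirming that $X^{k-2}-1\in W_{k-2}^+$ and that the second map is well-defined.

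The crux is \emph{injectivity}, and here I would bring in Haberland's formula, which writes a nonzero multiple of the Petersson product $\langle f,g\rangle_\tx{Pet}$ as a pairing of $r_f$ with $r_g$ assembled from the invariant form $\langle\,\cdot\,,\,\cdot\,\rangle$ and the action of $S$ and $U$. The decisive feature, exploited by Kohnen--Zagier, is that this pairing is compatible with the parity grading, so that the even periods $r_f^+$ and the odd periods $r_f^-$ each separately determine a nonzero multiple of $\langle f,f\rangle_\tx{Pet}$. Positivity of the Petersson product then gives $r_f^-=0\Rightarrow f=0$ and $r_f^+=0\Rightarrow f=0$; the first yields injectivity of $f\mapsto r_f^-$ and of the restriction of the second map to $\rmS_k$. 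To finish the second map I would separate the Eisenstein direction: the coboundary $X^{k-2}-1$ represents the nontrivial Eisenstein extension class and hence is not in the image of the cuspidal even periods, so $f\mapsto r_f^+$ and $z\mapsto z(X^{k-2}-1)$ have independent images and the map on $\rmS_k\oplus\CC$ is injective.

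Finally, matching dimensions upgrades injectivity to isomorphism. The spaces $W_{k-2}^\pm$ are cut out of $\CC[X]_{k-2}^\pm$ by the linear conditions $(1+S).p=0$ and $(1+U+U^2).p=0$, and the classical computation of the dimension of the space of period polynomials---using the amalgam presentation of $\SL2\ZZ$ through $S$ and $U$ with $S^2=U^3=-I$ acting trivially for even $k$---yields $\dim_\CC W_{k-2}^-=\dim_\CC\rmS_k$ and $\dim_\CC W_{k-2}^+=\dim_\CC\rmS_k+1$. Since both maps are injective between spaces of equal dimension, they are isomorphisms. I expect the injectivity step to be the main obstacle: fixing the normalization of Haberland's formula in the present conventions and verifying that its even and odd components each recover the Petersson norm is the delicate point, whereas well-definedness and the dimension count are essentially routine.
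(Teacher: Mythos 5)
The paper does not actually prove this theorem: it is quoted from the literature, with the reader directed to Shimura and Lang, whose treatments go through the cohomological form of the Eichler--Shimura isomorphism ($\rmH^1_\tx{pb}(\bfone,\sym{k-2}(X))\cong\rmS_k\oplus\ol{\rmS_k}$ via holomorphic and antiholomorphic Eichler integrals). Your sketch instead follows the Kohnen--Zagier period-polynomial route --- injectivity from a Haberland-type identity plus a dimension count for $W_{k-2}^{\pm}$ --- which is a legitimate and arguably more direct path to the statement in exactly the form given here. Your preparatory steps are correct: the splitting $r_f=r_f^-+ir_f^+$ into odd- and even-degree parts, the stability of $W_{k-2}$ under $p(X)\mapsto p(-X)$ via conjugation by $\mathrm{diag}(-1,1)$, and the identification $\rmB^1_\tx{pb}(\bfone,\sym{k-2}(X))(S)=\CC(X^{k-2}-1)$. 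Two points in the injectivity step need tightening. First, ``compatibility with the parity grading'' by itself is not enough: what you need is that $\llangle\cdot,\cdot\rrangle$ vanishes on $W^{\epsilon}_{k-2}\times W^{\epsilon}_{k-2}$ (which follows from $\llangle \iota p,\iota q\rrangle=-\llangle p,q\rrangle$ for $\iota p(X)=p(-X)$) \emph{combined with both} identities in~\eqref{eq:haberland_pasol_popa}; together these give $2i\,\llangle r_f^+,\ol{r_g^-}\rrangle=-6(2i)^{k-1}(f,g)$, and taking $f=g$ yields both injectivity claims on $\rmS_k$ from positivity of the Petersson product. Second, your justification that $X^{k-2}-1$ is independent of the cuspidal even periods is misstated: as a coboundary it represents the \emph{trivial} extension class, not a nontrivial one. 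The correct argument is the one recorded in the paper's own remark, namely that $e=X^{k-2}-1$ lies in the radical of $\llangle\cdot,\cdot\rrangle$ on $W_{k-2}$; hence $r_f^++ze=0$ forces $\llangle r_f^+,\ol{r_f^-}\rrangle=0$, so $f=0$ and then $z=0$. With these repairs your argument is complete modulo the classical (and genuinely nontrivial, though routine) computation $\dim W_{k-2}^-=\dim\rmS_k$ and $\dim W_{k-2}^+=\dim\rmS_k+1$.
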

For proofs, see for example~\cite{shimura_introduction_1971} or~\cite{lang_introduction_1976}.
We now state an important result related to the bilinear form $\llangle\cdot,\cdot\rrangle$.
Recall first that the Petersson inner product $(\cdot,\cdot):\rmM_k\times\rmS_k\to\CC$ is given by
\begin{gather*}
  (f,g)=\int_{\SL2\ZZ\bs\HS}f(x+iy)\ol{g(x+iy)}y^k\frac{\d{x}\d{y}}{y^2}
\end{gather*}
We have the following theorem, due to Haberland and Pa\c{s}ol-Popa.
\begin{untheorem}[Haberland~\cite{haberland_perioden_1983} and Pa\c{s}ol-Popa~\cite{pasol_modular_2013}]
  Let $k\geq 2$ be an even integer, and let $f,g\in\rmS_k$. Then it holds that
  \begin{gather}
    \label{eq:haberland_pasol_popa}
    \llangle r_f,\ol{r_g}\rrangle=-6(2i)^{k-1}(f,g)\qquad\tx{and}\qquad\llangle r_f,r_g\rrangle=0\tx{.}
  \end{gather} 
\end{untheorem}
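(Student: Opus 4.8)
The plan is to obtain both identities from a single application of Stokes' theorem on the standard fundamental domain $\cF=\{\tau\in\HS:|\tau|\ge1,\ |\Re(\tau)|\le\tfrac12\}$ for $\SL2\ZZ$, keeping everything $\CC[X,Y]_{k-2}$-valued and contracting to scalars only at the end. Writing $\sfd=k-2$, I would consider on $\HS$ the $2$-form
\[
  \Omega=f(z)(X-z)^\sfd\,\ol{g(z)}\,(\ol Y-\ol z)^\sfd\ \d{z}\wedge\d{\ol z},
\]
with $(X-z)^\sfd\in\CC[X]_\sfd$ and $(\ol Y-\ol z)^\sfd\in\CC[Y]_\sfd$. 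The first observation is that contracting with $\pi$ and integrating over $\cF$ recovers the Petersson product: from the elementary identity $\langle(X-z)^\sfd,(Y-w)^\sfd\rangle=(w-z)^\sfd$ (the binomial theorem) one gets $\pi(\Omega)=f(z)\ol{g(z)}\,(z-\ol z)^\sfd\,\d{z}\wedge\d{\ol z}$, and since $z-\ol z=2i\,\Im(\tau)$ and $\d{z}\wedge\d{\ol z}=-2i\,\d{x}\wedge\d{y}$ this gives $\int_\cF\pi(\Omega)=-(2i)^{k-1}(f,g)$.

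The second step is to exhibit $\Omega$ as an exact form. Since $f(z)(X-z)^\sfd=-\partial_z I_f(z;X)$ and $\ol{g(z)}(\ol Y-\ol z)^\sfd$ is antiholomorphic, one checks directly that $\Omega=\d{\beta}$ with $\beta=-I_f(z;X)\,\ol{g(z)}\,(\ol Y-\ol z)^\sfd\,\d{\ol z}$, so Stokes' theorem yields $\int_\cF\pi(\Omega)=\int_{\partial\cF}\pi(\beta)$. It then remains to evaluate the boundary integral. Here I would split $\partial\cF$ into its two vertical sides and its bottom arc and use the side-pairing transformations: $T$ identifies the lines $\Re(\tau)=\pm\tfrac12$, while $S$ folds the arc onto itself about $i$. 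Using that $f,g$ are modular and that $I_f$ transforms by the cocycle $\phi_{I_f}$ with $\phi_{I_f}(T)=0$ and $\phi_{I_f}(S)=r_f$, each boundary segment contributes an integral of $\ol g$ against a translate of $r_f$; collecting terms and invoking the period relations $(1+S).r_f=0$ and $(1+U+U^2).r_f=0$ collapses the boundary integral to $\tfrac16\llangle r_f,\ol{r_g}\rrangle$. Combined with the first step this gives $\llangle r_f,\ol{r_g}\rrangle=-6(2i)^{k-1}(f,g)$.

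For the second identity I would rerun the computation with the purely holomorphic $2$-form $f(z)(X-z)^\sfd\,g(z)(Y-z)^\sfd\,\d{z}\wedge\d{z}$, which vanishes identically because $\d{z}\wedge\d{z}=0$. Its primitive is $\beta'=-I_f(z;X)\,g(z)(Y-z)^\sfd\,\d{z}$, and Stokes now gives $0=\int_{\partial\cF}\pi(\beta')$. The boundary combinatorics are formally identical to the previous case with $\ol g$ replaced by $g$, so the same collapse produces $\tfrac16\llangle r_f,r_g\rrangle$, whence $\llangle r_f,r_g\rrangle=0$.

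The main obstacle is the boundary computation of the third step, and in particular pinning down the constant $6$ together with the precise combination $T^{-1}.r_f-T.r_f$. The delicate points are the orientation of $\partial\cF$, the bookkeeping of how the $S$- and $T$-identifications recombine the vertical-side and arc contributions into the antisymmetric form $\llangle\,\cdot\,,\,\cdot\,\rrangle$, and the handling of the order-$2$ and order-$3$ fixed points $i$ and $\rho$, which is exactly where the factor $6$ (reflecting the orbifold structure of $\SL2\ZZ\bs\HS$) enters.
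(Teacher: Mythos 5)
The paper does not actually prove this theorem---it is quoted from Haberland and Pa\c{s}ol--Popa with citations only---so there is no in-paper argument to compare against. Your strategy is the classical one (Haberland's original Stokes-theorem argument on the fundamental domain), and the steps you do carry out check out: the contraction identity $\langle (X-z)^\sfd,(Y-w)^\sfd\rangle=(w-z)^\sfd$ is correct for the paper's pairing, $\Omega=\d{\beta}$ holds since $\ol{g(z)}(\ol{Y}-\ol{z})^\sfd$ is antiholomorphic, and the bulk integral does evaluate to $-(2i)^{k-1}(f,g)$ (using that $\sfd=k-2$ is even, so the sign discrepancy between $(z-\ol z)^\sfd$ and $(\ol z-z)^\sfd$ is harmless). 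The reduction of the second identity to a holomorphic analogue of the same boundary computation is also a legitimate and known device.

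The genuine gap is the one you name yourself: the entire content of the theorem sits in the claim that $\int_{\partial\cF}\pi(\beta)$ collapses to $\pm\tfrac16\llangle r_f,\ol{r_g}\rrangle$, and this is asserted rather than derived. That step is not mere bookkeeping: it is where the specific antisymmetrization $\langle (T^{-1}-T).r_f,\ol{r_g}\rangle$ emerges (the vertical sides do \emph{not} cancel outright, since although $\phi_{I_f}(T)=0$ the polynomial variables are translated by $T$), where the arc integrals from $\rho$ to $i$ must be converted into full period integrals over $(0,i\infty)$ using the relations $(1+S).r_f=(1+U+U^2).r_f=0$ together with the order-$2$ and order-$3$ stabilizers of $i$ and $\rho$ (this is the source of the factor $6$), and where all signs and orientations must be tracked. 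Without this computation the constant $-6(2i)^{k-1}$ is not established, and the second identity $\llangle r_f,r_g\rrangle=0$ is likewise only reduced to an unperformed calculation. A complete write-up would also need the routine truncation of $\cF$ at height $Y\to\infty$ before applying Stokes, with the top edge killed by the cuspidality of $f$ and $g$. In short: right road map, but the destination is not reached.
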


To contextualize the assumptions of Theorem~\ref{theo:ortho_cond} in the next section, we recall the following:
\begin{remark}
  Let $\sfd\geq 0$ be an even integer and let $e=X^\sfd-1\in W_\sfd$. Then
  the relations $(1+S).e=0$ and $(1+U+U^2).e=0$ imply that $\llangle e,q\rrangle=0$
  for all $q\in W_\sfd$. Conversely, the Eichler-Shimura isomorphisms imply that
  if an element $p\in W_\sfd$ satisfies that $\llangle p,q\rrangle=0$ for all
  $q\in W_\sfd$, then $p\in\CC\{e\}$. This means that
  $(W_\sfd/\CC\{e\},\llangle\cdot,\cdot\rrangle)$ is a non-degenerate symplectic
  vector space, and by applying the Eichler-Shimura isomorphisms to a basis of
  orthonormalized Hecke eigenforms one obtains an explicit isomorphism
  of $(W_\sfd/\CC\{e\},\llangle\cdot,\cdot\rrangle)$ with the standard complex
  symplectic vector space, given by
  \begin{gather*}
    \Big(\CC^{2D},(x,y)\mapsto x^T\begin{pmatrix}0&-1_D\\1_D&0\end{pmatrix}y\Big)\tx{,}
  \end{gather*}
  where $D=\dim(\rmS_{\sfd+2})$, and $1_D$ denotes the $D\times D$ identity matrix.
\end{remark}
\section{From extensions to Eichler-Shimura integrals}
\label{sec:ext_to_esint}
In this section, we show that polynomial- and scalar-valued depth two
Eichler-Shimura integrals can be regarded as components of vector-valued modular
forms of type $\widetilde{\rho}_{f,g}$ and $\rho_{f,g}$, respectively.

\subsection{The extension \texpdf{$\rho_{\phi_1,\phi_2}$}{rho\_phi1,phi2}}%
\label{ssec:the_ext}
Let $\sfd\geq 0$ be an even integer. For $\phi\in\rmZ^1_\tx{pb}\big(\bfone,\symd(X)\big)$,
we let
\begin{gather}
  \label{eq:duality}
  \phi^\vee(\ga)(v)=\big\langle\phi(\ga^{-1}),\,v\big\rangle\tx{.}
\end{gather}
The invariance of the pairing implies that $\phi^\vee\in\rmZ^1_\tx{pb}\big(\symd(X),\bfone\big)$,
and that~\eqref{eq:duality} defines an isomorphism of $\CC$-vector spaces from
$\rmZ^1_\tx{pb}\big(\bfone,\symd(X)\big)$ to $\rmZ^1_\tx{pb}\big(\symd(X),\bfone\big)$.

Given parabolic cocycles $\phi_1,\phi_2\in\rmZ^1_\tx{pb}\big(\bfone,\symd(X)\big)$ and
a function $\psi:\SL2\ZZ\to\CC$, we let
$\rho_{\phi_1,\phi_2,\psi}:\SL2\ZZ\to\GLbig{\CC\oplus\CC[X]_\sfd\oplus\CC}$ be given
by
\begin{gather}
  \rho_{\phi_1,\phi_2,\psi}(\ga)
=
  \begin{pmatrix}1&\phi_2^\vee(\ga) & \psi(\ga)\\
                 0&\symd(X)(\ga) & \phi_1(\ga)\\
                 0 & 0 & 1\end{pmatrix}\tx{,}\quad\ga\in\SL2\ZZ\tx{.}
\end{gather}
We have the following proposition.
\begin{proposition}
  \label{prop:TFAE}
  Let $\phi_1,\phi_2\in\rmZ^1_\mathrm{pb}\big(\bfone,\symd(X)\big)$, and let $\psi:\SL2\ZZ\to\CC$
  be a function. Then the following are equivalent
  \begin{enumerate}[(i)]
    \item $\rho_{\phi_1,\phi_2,\psi}$ is a representation,
    \item for all $\ga_1,\ga_2\in\SL2\ZZ$ it holds that $\psi(\ga_1\ga_2)=\psi(\ga_1)+\psi(\ga_2)+\phi_2^\vee(\ga_1)\phi_1(\ga_2)$,
    \item $(\phi_2^\vee,\psi)\in\rmZ^1_\mathrm{pb}\big(\symd(X)\bp_{\phi_1}\bfone,\bfone\big)$,
    \item $(\psi,\phi_1)^T\in\rmZ^1_\mathrm{pb}\big(\bfone,\bfone\bp_{\phi_2^\vee}\symd(X)\big)$.
  \end{enumerate}
  Furthermore, if any of these conditions hold, then
  \begin{gather*}
    \rho_{\phi_1,\phi_2,\psi}
  =
    \bfone\bp_{(\phi_2^\vee,\psi)}\big(\symd(X)\bp_{\phi_1}\bfone\big)
  =
    \big(\bfone\bp_{\phi_2^\vee}\symd(X)\big)\bp_{\big(\begin{smallmatrix}\psi\\\phi_1\end{smallmatrix}\big)}\bfone\tx{.}
  \end{gather*}
\end{proposition}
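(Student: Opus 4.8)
The plan is to reduce all four statements to the single functional equation (ii) and to read (i), (iii), (iv) off the block structure of the matrix, treating (iii) and (iv) as the two ways of presenting the same $3\times 3$ block-triangular matrix as an iterated $\boxplus$-extension.

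First I would prove (i)$\Leftrightarrow$(ii) by a direct block-matrix computation. Writing out $\rho_{\phi_1,\phi_2,\psi}(\ga_1)\rho_{\phi_1,\phi_2,\psi}(\ga_2)$ and comparing entrywise with $\rho_{\phi_1,\phi_2,\psi}(\ga_1\ga_2)$, the $(2,2)$-entry matches because $\symd(X)$ is a representation, the $(1,2)$-entry matches because $\phi_2^\vee\in\rmZ^1_\mathrm{pb}(\symd(X),\bfone)$ (recorded in Section~\ref{ssec:the_ext} via invariance of the pairing), and the $(2,3)$-entry matches because $\phi_1\in\rmZ^1_\mathrm{pb}(\bfone,\symd(X))$ is a cocycle. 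The only entry that is not automatic is the $(1,3)$-entry, and matching it is exactly the identity
\[
  \psi(\ga_1\ga_2)=\psi(\ga_1)+\psi(\ga_2)+\phi_2^\vee(\ga_1)\phi_1(\ga_2),
\]
which is (ii). Since $\rho_{\phi_1,\phi_2,\psi}(I)$ is the identity automatically, multiplicativity is the whole content of being a representation, so (i)$\Leftrightarrow$(ii).

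Next I would obtain (ii)$\Leftrightarrow$(iv) and (ii)$\Leftrightarrow$(iii) by unwinding the $\boxplus$-construction. For (iv), the datum $(\psi,\phi_1)^T$ is a map $\SL2\ZZ\to V(\bfone\boxplus_{\phi_2^\vee}\symd(X))$; substituting the explicit action $(\bfone\boxplus_{\phi_2^\vee}\symd(X))(\ga)(v,w)=(v+\phi_2^\vee(\ga)w,\symd(X)(\ga)w)$ into the cocycle relation for $\rmZ^1(\bfone,\bfone\boxplus_{\phi_2^\vee}\symd(X))$, the relation splits into two coordinates: the $\symd(X)$-coordinate is the (automatic) cocycle relation for $\phi_1$, while the $\CC$-coordinate, which picks up the off-diagonal term $\phi_2^\vee(\ga_1)\phi_1(\ga_2)$, is precisely (ii). The parabolic requirement is that $(\psi,\phi_1)^T$ vanish on $\Ga_\infty=\langle T,-I\rangle$; since $\phi_1$ is already parabolic this is the condition $\psi|_{\Ga_\infty}=0$, and I would note that (ii) itself forces $\psi(\pm I)=0$ (take $\ga_1=\ga_2=I$, then $\ga_1=\ga_2=-I$, using $\phi_1(\pm I)=\phi_2^\vee(\pm I)=0$). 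The argument for (iii) is symmetric, now viewing $(\phi_2^\vee,\psi)$ as a map into $\Hom(\symd(X)\boxplus_{\phi_1}\bfone,\bfone)$ and reading the two coordinates of its cocycle relation as the automatic relation for $\phi_2^\vee$ together with (ii).

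For the \emph{furthermore}, I would expand both iterated extensions as block-upper-triangular matrices and check they reproduce the defining matrix. Using $(\rho\boxplus_\phi\sigma)(\ga)(v,v')=(\rho(\ga)v+\phi(\ga)v',\sigma(\ga)v')$ with $\rho=\bfone$, $\sigma=\symd(X)\boxplus_{\phi_1}\bfone$, and gluing cochain $(\phi_2^\vee,\psi)$, the action on $(v,w,z)\in\CC\oplus\CC[X]_\sfd\oplus\CC$ comes out as
\[
  \big(v+\phi_2^\vee(\ga)w+\psi(\ga)z,\ \symd(X)(\ga)w+\phi_1(\ga)z,\ z\big),
\]
which is $\rho_{\phi_1,\phi_2,\psi}(\ga)$ applied to $(v,w,z)^T$; the other grouping $(\bfone\boxplus_{\phi_2^\vee}\symd(X))\boxplus_{(\psi,\phi_1)^T}\bfone$ yields the identical expression with the blocks associated the other way, so all three representations coincide. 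The step I expect to require the most care is the parabolicity bookkeeping in (iii) and (iv): the cocycle identities for the glued cochains are formal consequences of (ii), but ``parabolic'' additionally demands vanishing on all of $\Ga_\infty$, and since (ii) supplies only $\psi(\pm I)=0$ directly, I would track the value of $\psi$ on $T$ explicitly and make sure the equivalence is read off at the level of cochains consistently with the parabolic normalization fixed in Section~\ref{ssec:the_ext}.
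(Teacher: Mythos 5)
Your route is the paper's route: (i)$\Leftrightarrow$(ii) by the block-matrix computation in which only the $(1,3)$-entry is not forced by the cocycle relations for $\phi_1$ and $\phi_2^\vee$, and (ii)$\Leftrightarrow$(iii)$\Leftrightarrow$(iv) by unwinding the $\bp$-cocycle relation coordinate-wise; the \emph{furthermore} is the same routine expansion of the two iterated extensions. Up to that point the proposal is correct and matches the paper's proof.

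The genuine gap is exactly the point you flag and then leave open: the parabolicity of $(\phi_2^\vee,\psi)$ and $(\psi,\phi_1)^T$. Your coordinate-wise computation shows that the \emph{cocycle identity} for these cochains is equivalent to (ii), and you correctly extract $\psi(\pm I)=0$ from (ii); but membership in $\rmZ^1_\tx{pb}$ additionally demands $\psi(T)=0$, and this does \emph{not} follow from (ii). Indeed, a function satisfying (ii) always exists and is unique: the term $\phi_2^\vee(\ga_1)\phi_1(\ga_2)$ is a $2$-cocycle with values in $\CC$, every such cocycle is a coboundary because $\mathrm{H}^2(\SL2\ZZ,\CC)=0$, and two solutions of (ii) differ by a homomorphism $\SL2\ZZ\to\CC$, which is trivial since the abelianization of $\SL2\ZZ$ is finite. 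On the other hand, Theorem~\ref{theo:ortho_cond} shows that a solution of (ii) with $\psi(T)=0$ exists only when $\llangle\phi_1(S),\phi_2(S)\rrangle=0$. So for a pair of parabolic cocycles violating that orthogonality relation, the unique $\psi$ solving (ii) satisfies (i) and (ii) but has $\psi(T)\neq 0$ and therefore fails (iii) and (iv); your plan to ``track the value of $\psi$ on $T$'' cannot close the implication (ii)$\Rightarrow$(iii) as an unconditional statement. The honest conclusions available are: (i)$\Leftrightarrow$(ii), (iii)$\Leftrightarrow$(iv), and (iii)$\Leftrightarrow$\big((ii) and $\psi|_{\Ga_\infty}=0$\big). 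To be fair to you, the paper's own proof verifies only the cocycle identities and is silent on the vanishing along $\Ga_\infty$, so it carries the same gap; but since you explicitly raised the issue, you should either add the hypothesis $\psi|_{\Ga_\infty}=0$ to (i) and (ii), or record that (iii) and (iv) are a priori strictly stronger and that the discrepancy is measured by $\llangle\phi_1(S),\phi_2(S)\rrangle$, which is what Theorem~\ref{theo:ortho_cond} then quantifies.
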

\begin{proof}
  Let $\rho=\rho_{\phi_1,\phi_2,\psi}$ and let $\ga_1,\ga_2\in\SL2\ZZ$. Then using
  the cocycle relations, and the fact that $\symd(X)$ is a representation, we find that
  \begin{gather*}
    \rho(\ga_1)\rho(\ga_2)
  =
    \begin{pmatrix}
      1 & \phi_2^\vee(\ga_1\ga_2) & \psi(\ga_2) + \phi_2^\vee(\ga_1)\phi_1(\ga_2) + \psi(\ga_1)\\
      0 & \symd(X)(\ga_1\ga_2) & \phi_1(\ga_1\ga_2)\\
      0 & 0 & 1
    \end{pmatrix}\tx{.}
  \end{gather*}
  We have that $\rho$ is a representation if and only if it is a homomorphism
  and therefore we see that (i) is equivalent to (ii).

  Continuing, we have that $(\phi_2^\vee,\psi)\in\rmZ^1_\tx{pb}\big(\symd(X)\bp_{\phi_1}\bfone,\bfone\big)$
  if and only if
  \begin{gather*}
    \big(\phi_2^\vee(\ga_2),\psi(\ga_2)\big)+\big(\phi_2^\vee(\ga_1),\psi(\ga_1)\big)\big(\symd(X)\bp_{\phi_1}\bfone\big)(\ga_2)
  =
    \big(\phi_2^\vee(\ga_1\ga_2),\psi(\ga_1\ga_2)\big)\tx{.}
  \end{gather*}
  On the other hand, we have that
  \begin{align*}
    &\big(\phi_2^\vee(\ga_2),\psi(\ga_2)\big)+\big(\phi_2^\vee(\ga_1),\psi(\ga_1)\big)\big(\symd(X)\bp_{\phi_1}\bfone\big)(\ga_2)\\
    %&=(\phi_2^\vee(\ga_2),\psi(\ga_2))+(\phi_2^\vee(\ga_1)\symd(X)(\ga_2),\phi_2^\vee(\ga_1)\phi_1(\ga_2)+\psi(\ga_1))\\
    &=\big(\phi_2^\vee(\ga_2)+\phi_2^\vee(\ga_1)\symd(X)(\ga_2),\,\psi(\ga_2)+\phi_2^\vee(\ga_1)\phi_1(\ga_2)+\psi(\ga_1)\big)\\
    &=\big(\phi_2^\vee(\ga_1\ga_2),\,\psi(\ga_1)+\psi(\ga_2)+\phi_2^\vee(\ga_1)\phi_2(\ga_2)\big)\tx{.}
  \end{align*}
  Hence, it is clear that (iii) is equivalent to (ii). In the same way, we find that
  (iv) is equivalent to (ii), which finishes the proof.
\end{proof}
For a fixed pair of cocycles $(\phi_1,\phi_2)$ there is at most one function $\psi$
satisfying the conditions of Proposition~\ref{prop:TFAE}.
\begin{theorem}
  \label{theo:ortho_cond}
  Let $\sfd\geq 0$ be an even integer and let $\phi_1,\phi_2\in\rmZ^1_\mathrm{pb}(\bfone,\symd(X))$.
  Then there exists a function $\psi$ satisfying the conditions of Proposition~\ref{prop:TFAE}
  if and only if $\llangle\phi_1(S),\phi_2(S)\rrangle=0$. Furthermore, if such a function
  exists, it is unique and is given by
  \begin{gather*}
    \psi(S)=-\mfrac{1}{2}\phi_2^\vee(S)\phi_1(S)\tx{.}
  \end{gather*}
\end{theorem}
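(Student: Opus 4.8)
The plan is to establish the stated equivalence through condition~(iv) of Proposition~\ref{prop:TFAE}, so that producing~$\psi$ amounts to producing a parabolic cocycle $(\psi,\phi_1)^T\in\rmZ^1_{\mathrm{pb}}\big(\bfone,\bfone\bp_{\phi_2^\vee}\symd(X)\big)$ whose lower component is the prescribed~$\phi_1$. Write $\sigma=\bfone\bp_{\phi_2^\vee}\symd(X)$ and $F=(\psi,\phi_1)^T$. As recalled in Section~\ref{sec:prelim}, a parabolic cocycle vanishes on $\Ga_\infty=\langle T,-I\rangle$ and is therefore determined by its value at~$S$, subject to the two elliptic relations coming from $S^2=-I$ and $(ST)^3=-I$, namely
\begin{gather*}
  \big(1+\sigma(S)\big)F(S)=0
\quad\tx{and}\quad
  \big(1+\sigma(ST)+\sigma(ST)^2\big)F(S)=0\tx{,}
\end{gather*}
exactly paralleling the relations that cut out $\rmZ^1_{\mathrm{pb}}(\bfone,\symd(X))$. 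Here I use that $\sigma(-I)=\id$, which holds because $\sfd$ is even, and that $F(ST)=F(S)$, which holds because $F(T)=0$. Thus the whole problem is to solve these two relations for the single scalar $\psi(S)$, with $F(S)=(\psi(S),\phi_1(S))^T$.

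First I would note that the lower, $\CC[X]_\sfd$-valued components of both relations are automatically satisfied: they read $(1+S).\phi_1(S)=0$ and $(1+ST+(ST)^2).\phi_1(S)=0$, which hold because $\phi_1\in\rmZ^1_{\mathrm{pb}}(\bfone,\symd(X))$, i.e.\ $\phi_1(S)\in W_\sfd$. Hence only the two scalar top components constrain $\psi(S)$. The top component of the first relation is $2\psi(S)+\phi_2^\vee(S)\phi_1(S)=0$, which forces the asserted value
\begin{gather*}
  \psi(S)=-\tfrac12\,\phi_2^\vee(S)\phi_1(S)\tx{.}
\end{gather*}
Since a parabolic cocycle is determined by its value at~$S$, this already yields uniqueness; alternatively, two admissible functions differ by a homomorphism $\SL2\ZZ\to\CC$, and there are none because the abelianisation of $\SL2\ZZ$ is finite.

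It remains to feed this value into the top component of the second relation, which is the heart of the matter. Expanding the upper-right block of $1+\sigma(ST)+\sigma(ST)^2$ and writing $p=\phi_1(S)$, $q=\phi_2(S)$, this scalar equation becomes
\begin{gather*}
  3\psi(S)+2\,\phi_2^\vee(ST)\,p+\phi_2^\vee(ST)\big((ST).p\big)=0\tx{.}
\end{gather*}
I would then rewrite each term using the cocycle identity $\phi_2(\ga^{-1})=-\ga^{-1}.\phi_2(\ga)$, the $\SL2\ZZ$-invariance and symmetry of $\langle\,\cdot\,,\,\cdot\,\rangle$, and the relations $S.p=-p$, $S.q=-q$ and $(1+ST+(ST)^2).p=0$ satisfied in $W_\sfd$. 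Concretely, $\phi_2\big((ST)^{-1}\big)=T^{-1}.q$, so $\phi_2^\vee(ST)\,v=\langle q,T.v\rangle$, while $(1+ST+(ST)^2).p=0$ together with $S.p=-p$ gives the key simplification $T.S.T.p=p-T.p$. Substituting $\psi(S)=-\tfrac12\langle q,p\rangle$ and simplifying, the left-hand side collapses to $-\tfrac12\langle T^{-1}.p-T.p,\,q\rangle=-\tfrac12\llangle\phi_1(S),\phi_2(S)\rrangle$. Therefore the second relation is solvable if and only if $\llangle\phi_1(S),\phi_2(S)\rrangle=0$, which is the asserted orthogonality condition.

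The main obstacle is this last reduction: transporting the scalar part of the order-six relation --- which is phrased through $\phi_2^\vee$ and the $\symd(X)$-action --- into the antisymmetric form $\llangle\,\cdot\,,\,\cdot\,\rrangle$. The bookkeeping is delicate, since one must repeatedly move the group action across the pairing and invoke the $W_\sfd$-relations for both $p$ and $q$; the identity $\langle q,(T+T^{-1}).p\rangle=\langle q,p\rangle$, itself a consequence of $T.S.T.p=p-T.p$ and $S.q=-q$, is exactly what makes the coefficients align so that precisely $\llangle\phi_1(S),\phi_2(S)\rrangle$ survives.
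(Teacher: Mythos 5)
Your proposal is correct and follows essentially the same route as the paper: both reduce the existence of $\psi$ to the two elliptic relations at $S$ and $ST$ coming from the presentation of $\SL2\ZZ$ (you via condition~(iv) of Proposition~\ref{prop:TFAE}, the paper via $\rho(S)^4=(\rho(S)\rho(T))^6=1$), solve the first for $\psi(S)$, and show the second is equivalent to $\llangle\phi_1(S),\phi_2(S)\rrangle=0$ using the period-polynomial relations on $\phi_1(S)$ and $\phi_2(S)$. The only (cosmetic) difference is in the last step, where the paper passes through the double quotient $(1+S)\backslash\CC[\SL2\ZZ/\pm I]/(1+S)$ while you use the equivalent identity $\langle q,(T+T^{-1}).p\rangle=\langle q,p\rangle$; I checked that your intermediate identities, in particular $TST.p=p-T.p$, are correct and suffice.
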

\begin{proof}
  Let $\psi:\SL2\ZZ\to\CC$ be a function and let $\rho=\rho_{\phi_1,\phi_2,\psi}$.
  Recall that $\SL2\ZZ$ has the presentation $\langle S,T:S^4=(ST)^6=1\rangle$.
  Hence we have that $\rho$ is a representation if and only if
  \begin{gather*}
    \rho(S)^4=1\tx{ and }(\rho(S)\rho(T))^6=1\tx{.}
  \end{gather*}
  However, we find that
  \begin{align*} 
    \rho(S)^2
  &=
    \begin{pmatrix}
      1 & 0 & 2\psi(S) + \phi_2^\vee(S)\phi_1(S)\\
      0 & 1_\sfd & 0\\
      0 & 0 & 1
    \end{pmatrix}\tx{ and}\\
    (\rho(S)\rho(T))^3
  &=
    \begin{pmatrix}
      1 & 0 & 3\psi(S)+\phi_2^\vee((ST)^2)\phi_1(S)+\phi_2^\vee(ST)\phi_1(S)\\
      0 & 1_\sfd & 0\\
      0 & 0 & 1
    \end{pmatrix}\tx{,}
  \end{align*}
  where $1_\sfd$ is the $\sfd\times\sfd$ identity matrix. Hence $\rho$ is a
  representation if and only if
  \begin{align*}
      2\psi(S)+\phi_2^\vee(S)\phi_1(S)&=0\tx{ and}\\
      3\psi(S)+\phi_2^\vee((ST)^2)\phi_1(S)+\phi_2^\vee(ST)\phi_1(S)&=0\tx{.}
  \end{align*}
  We have that $\phi_2^\vee((ST)^2)=\phi_2^\vee(ST)+\phi_2^\vee(ST)ST$, that
  $\phi_2^\vee(ST)=\phi_2^\vee(S)T$, and that $T\phi_1(S)=\phi_1(TS)$. With~$U = TS$ this yields
  that the above is equivalent to
  \begin{align*}
    \psi(S)&=-\tfrac{1}{2}\phi_2^\vee(S)\phi_1(S)\tx{ and}\\
      \phi_2^\vee(S)(2\phi_1(U)+2\phi_1(U^2)-3\phi_1(S))&=0\tx{.}
  \end{align*}
  However, applying the identity $(1+U+U^2).\phi_1(S)=0$ to expand~$2\phi_1(S)$ and then~$(1+S).\phi_1(S) = 0$ to simplify the expression, we obtain that
  \begin{multline*}
    2\big(\phi_1(U)+\phi_1(U^2)\big)-3\phi_1(S)
  =
    2\big(\phi_1(U)+\phi_1(U^2)+U.\phi_1(S)+U^2.\phi_1(S)\big)-\phi_1(S)\\
  =
    2\big(T.\phi_1(S)+TST.\phi_1(S)+U^2.\phi_1(S)\big)-\phi_1(S)
  =
    2T.\phi_1(S)-\phi_1(S)\tx{,}
  \end{multline*}
  so that
  $\phi_2^\vee(S)\big(2\phi_1(U)+2\phi_1(U^2)-3\phi_1(S)\big)
  =
  \big\langle 2T^{-1}.\phi_2(S)-\phi_2(S),\,\phi_1(S)\big\rangle$.
To finish the proof we have to identify the right hand side with~$\llangle \phi_2(S),\phi_1(S)\rrangle = \big\langle (T^{-1}-T).\phi_2(S),\,\phi_1(S)\big\rangle$. To this end, note that for any element~$H \in \CC[\SL2\ZZ]$ the expression~$\langle H.\phi_2(S),\phi_1(S)\rangle$ only depends on the image of~$H$ in the double quotient~$(1+S) \backslash \CC[\SL2\ZZ \slash \pm I] \slash (1+S)$. Denoting equality in this quotient by~$\equiv$, the result follows from~$(1+U+U^2).\phi_1(S)=0$ and
\begin{gather*}
 T^{-1} - 1 + T
=
 S U^{-1} - 1 + U S^{-1}
=
  1 + (S U^{-1} - 1)(1 - US^{-1})
\equiv
  1 + (- U^{-1} - 1)(1 + U)
=
  - U^{-1} - 1 - U
\tx{.}
\end{gather*}
\end{proof}
If a pair of cocycles $(\phi_1,\phi_2)\in\rmZ^1_\tx{pb}\big(\bfone,\symd(X)\big)^2$ satisfies
that $\llangle\phi_1,\phi_2\rrangle=0$, we call it admissible. If
$(\phi_1,\phi_2)$ is admissible, then we omit $\psi$ from the notation and write
$\rho_{\phi_1,\phi_2}=\rho_{\phi_1,\phi_2,\psi}$.
\subsection{Depth two Eichler-Shimura integrals}
\label{ssec:d2esint}
In this section we describe depth two Eichler-Shimura integrals as components of
vector-valued modular forms of the types $\widetilde{\rho}_{f,g}$ and $\rho_{f,g}$,
where $f,g\in\rmS_k$, $k\in\ZZ_{\geq 2}$; defined in the introduction, see~\eqref{eq:fat_rep}
and~\eqref{eq:thin_rep}.

Recall that for $f,g\in\rmS_k$, $k\in\ZZ_{\geq 2}$, and indeterminates $X$ and $Y$
we have the depth two polynomial-valued Eichler-Shimura integral $I_{f,g}(\;\cdot\;;X,Y)$ and
the depth two scalar-valued Eichler-Shimura integral $\cE_{f,g}$, given by
\begin{gather*}
  I_{f,g}(\tau;X,Y)=\int_\tau^{i\infty}f(z)\,(X-z)^{k-2}\,I_g(z;Y)\d{z}
\quad\tx{and}\quad
  \cE_{f,g}(\tau)=\int_\tau^{i\infty}f(z)\,\cE_g(z)\d{z}\tx{,}\quad\tau\in\HS\tx{,}
\end{gather*}
where $I_g(\tau;Y)=\int_\tau^{i\infty}g(z)(Y-z)^{k-2}\d{z}$
and $\cE_g(\tau)=I_g(\tau;\tau)$ are the polynomial-valued
Eichler integral and the scalar-valued Eichler integral.

Recall that for $\ga\in\SL2\ZZ$ we have that
$\widetilde{\psi}_{f,g}(\ga;X,Y)=\int_{\ga(i\infty)}^{i\infty}f(z)\,(X-z)^{k-2}\,I_g(z;Y)\d{z}$. Let
also
\begin{align*}
  \phi_{I_g}\cdot\sym{k-2}(X):\SL2\ZZ&\to\Hom\big(\CC[X]_{k-2},\CC[X,Y]_{k-2}\big)
  \quad\tx{be given by }\\
  \big(\phi_{I_g}\cdot\sym{k-2}(X)\big)(\ga)p&{}=\phi_{I_g}(\ga)\cdot\sym{k-2}(X)(\ga)p
\tx{.}
\end{align*}
With this notation, we have that
\begin{gather}
  \label{eq:fat_rep_alt}
  \widetilde{\rho}_{f,g}
=
  \sym{k-2}(X,Y)
  \bp_{(\phi_{I_g}\cdot\sym{k-2}(X),\psi_{f,g})}
  \big(\sym{k-2}(X)\bp_{\phi_{I_f}}\bfone\big)\tx{.}
\end{gather}
We have that $(I_f,1)^T\in\rmM_0\big(\sym{k-2}(X)\bp_{\phi_{I_f}}\bfone\big)$, see~\cite{ahlback_eichler_2022},
and for the depth two polynomial-valued Eichler-Shimura integral we have the following theorem.
\begin{theorem}
  \label{theo:polynomial_eichler_shimura}
  Let $k\geq 2$ be an integer, and let $f,g\in\rmS_k$. Then the arithmetic
  type $\widetilde{\rho}_{f,g}$ given by~\eqref{eq:fat_rep} or~\eqref{eq:fat_rep_alt}
  is well-defined, and we have that
  \begin{gather*}
    \begin{pmatrix}
      I_{f,g}\\
      I_f\\
      1
    \end{pmatrix}
  \in
    \rmM_0(\widetilde{\rho}_{f,g})\tx{.}
  \end{gather*}
\end{theorem}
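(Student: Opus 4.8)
The plan is to reduce both assertions---that $\widetilde{\rho}_{f,g}$ is a representation and that $(I_{f,g},I_f,1)^T$ is modular of type $\widetilde{\rho}_{f,g}$---to a single transformation law for a two-variable iterated integral, and then to read off each statement by specializing endpoints. Introduce the shorthand
\begin{gather*}
  J_f(\tau_1,\tau_2;X)=\int_{\tau_1}^{\tau_2}f(z)(X-z)^{k-2}\d{z}
\tx{,}\quad
  K(\tau_1,\tau_2;X,Y)=\int_{\tau_1}^{\tau_2}f(z)(X-z)^{k-2}I_g(z;Y)\d{z}\tx{,}
\end{gather*}
so that $I_f(\tau)=J_f(\tau,i\infty)$, $I_{f,g}(\tau)=K(\tau,i\infty)$, $\phi_{I_f}(\ga)=J_f(\ga(i\infty),i\infty)$ and $\widetilde{\psi}_{f,g}(\ga)=K(\ga(i\infty),i\infty)$. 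The core of the proof is the master identity
\begin{gather}
  \label{eq:master}
  K(\ga\tau_1,\ga\tau_2;X,Y)
=
  \sym{k-2}(X,Y)(\ga)\,K(\tau_1,\tau_2;X,Y)
  +
  \phi_{I_g}(\ga;Y)\,\sym{k-2}(X)(\ga)\,J_f(\tau_1,\tau_2;X)\tx{.}
\end{gather}
I would prove \eqref{eq:master} by the substitution $z=\ga w$ in the left-hand integral, using three inputs: the weight-$k$ modularity of $f$ together with the elementary equivariance $f(\ga w)(X-\ga w)^{k-2}(cw+d)^{-2}\d{w}=f(w)\big[\sym{k-2}(X)(\ga)(X-w)^{k-2}\big]\d{w}$; the depth-one transformation $I_g(\ga w;Y)=\sym{k-2}(Y)(\ga)I_g(w;Y)+\phi_{I_g}(\ga;Y)$, which is just the modularity of $(I_g,1)^T$; and the factorization $\big[\sym{k-2}(X)(\ga)p\big]\big[\sym{k-2}(Y)(\ga)q\big]=\sym{k-2}(X,Y)(\ga)(p\otimes q)$ of the tensor action. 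Splitting the resulting integrand into the two summands produced by $I_g(\ga w;Y)$ and pulling the $w$-independent factors $\phi_{I_g}(\ga;Y)$ and $\sym{k-2}(X)(\ga)$ out of the second integral yields \eqref{eq:master}.

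Given \eqref{eq:master}, well-definedness follows from the extension formalism of Section~\ref{ssec:the_ext}. By \eqref{eq:fat_rep_alt} it suffices to check that $\Phi=(\phi_{I_g}\cdot\sym{k-2}(X),\widetilde{\psi}_{f,g})$ is a parabolic cocycle in $\rmZ^1_\tx{pb}\big(\sym{k-2}(X)\bp_{\phi_{I_f}}\bfone,\sym{k-2}(X,Y)\big)$. Its $\CC[X]_{k-2}$-component is the cocycle relation for $\phi_{I_g}\cdot\sym{k-2}(X)$, which is the standard identity for the tensor of a $\big(\bfone,\sym{k-2}(Y)\big)$-cocycle with the representation $\sym{k-2}(X)$, and its $\CC$-component is the relation $\widetilde{\psi}_{f,g}(\ga_1\ga_2)=\sym{k-2}(X,Y)(\ga_1)\widetilde{\psi}_{f,g}(\ga_2)+\phi_{I_g}(\ga_1;Y)\sym{k-2}(X)(\ga_1)\phi_{I_f}(\ga_2)+\widetilde{\psi}_{f,g}(\ga_1)$. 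This last identity drops out of \eqref{eq:master} on taking $(\tau_1,\tau_2)=(\ga_2(i\infty),i\infty)$ and $\ga=\ga_1$, writing $K(\ga_1\ga_2(i\infty),\ga_1(i\infty))=\widetilde{\psi}_{f,g}(\ga_1\ga_2)-\widetilde{\psi}_{f,g}(\ga_1)$ by additivity of the path integral, and recognizing $K(\ga_2(i\infty),i\infty)=\widetilde{\psi}_{f,g}(\ga_2)$ and $J_f(\ga_2(i\infty),i\infty)=\phi_{I_f}(\ga_2)$. Parabolicity is immediate, since $\Ga_\infty$ fixes the cusp $i\infty$, so $\widetilde{\psi}_{f,g}$ and $\phi_{I_g}\cdot\sym{k-2}(X)$ both vanish on $\Ga_\infty$.

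For the modular transformation, membership in $\rmM_0(\widetilde{\rho}_{f,g})$ is equivalent to $F(\ga\tau)=\widetilde{\rho}_{f,g}(\ga)F(\tau)$ for all $\ga\in\SL2\ZZ$, where $F=(I_{f,g},I_f,1)^T$, together with holomorphy and moderate growth. The lower two components of this vector identity reproduce the known depth-one statement $(I_f,1)^T\in\rmM_0\big(\sym{k-2}(X)\bp_{\phi_{I_f}}\bfone\big)$, while the top component is exactly $I_{f,g}(\ga\tau)=\sym{k-2}(X,Y)(\ga)I_{f,g}(\tau)+\phi_{I_g}(\ga;Y)\sym{k-2}(X)(\ga)I_f(\tau)+\widetilde{\psi}_{f,g}(\ga)$. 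I would obtain this by splitting $I_{f,g}(\ga\tau)=K(\ga\tau,i\infty)=K(\ga\tau,\ga(i\infty))+\widetilde{\psi}_{f,g}(\ga)$ and applying \eqref{eq:master} with $(\tau_1,\tau_2)=(\tau,i\infty)$ to the first term. It remains to record holomorphy and moderate growth of $I_{f,g}$, which is routine from the cuspidality of $f$ and $g$: the outer integrand decays exponentially at $i\infty$ while $I_g$ grows at most polynomially, so the iterated integral converges and has moderate growth.

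The main obstacle is the master identity \eqref{eq:master}: all the arithmetic of the extension---in particular the appearance of the cross-term $\phi_{I_g}(\ga;Y)\,\sym{k-2}(X)(\ga)$ in exactly the position dictated by \eqref{eq:fat_rep}---is controlled by the failure of $I_g$ to be equivariant, encoded by the inhomogeneous term $\phi_{I_g}(\ga;Y)$ in its transformation law. The bookkeeping must be arranged so that, after the substitution, this inhomogeneity combines with $J_f$ rather than with $K$; verifying the tensor factorization of the $\sym{k-2}(X,Y)$-action and that $\phi_{I_g}\cdot\sym{k-2}(X)$ is genuinely a cocycle is where care is needed, though each step is a direct computation.
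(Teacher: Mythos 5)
Your proposal is correct and follows essentially the same route as the paper: the ``master identity'' you prove by the substitution $z=\ga w$ is precisely the paper's change-of-variables identity $\big(I_{f,g}\big|(1-\ga^{-1})\big)(\tau)=\widetilde{\psi}_{f,g}(\ga)+\big(\phi_{I_g}\cdot\sym{k-2}(X)\big)(\ga)\,I_f(\ga^{-1}\tau)$ stated for general endpoints, and your endpoint specializations $(\tau,i\infty)$ and $(\ga_2(i\infty),i\infty)$ recover, respectively, the paper's verification of the transformation law and its derivation of the cocycle identity $\widetilde{\psi}_{f,g}(\ga_1\ga_2)=\ga_1.\widetilde{\psi}_{f,g}(\ga_2)+\widetilde{\psi}_{f,g}(\ga_1)+\phi(\ga_1)\phi_{I_f}(\ga_2)$. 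The only cosmetic difference is that the paper obtains the latter by evaluating $\big(I_{f,g}\big|(1-(\ga_1\ga_2)^{-1})\big)(\tau)$ in two ways rather than by path additivity, which amounts to the same computation.
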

\begin{proof} 
  For convenience, we use the shorthand notation $|$ for $|_{0,\sym{k-2}(X,Y)}$,
  $\phi$ for $\phi_{I_g}\cdot\sym{k-2}(X)$, and $\rho$ for $\widetilde{\rho}_{f,g}$.
  Through direct calculation, we see that $\phi\in\rmZ^1_\tx{pb}\big(\sym{k-2}(X),\sym{k-2}(X,Y)\big)$.
  By a standard change of variables (see~\cite{kohnen_modular_1984}), we also obtain
  that
  \begin{gather}
  \label{eq:theo:polynomial_eichler_shimura}
    \phi_{f,g}(\ga,\tau)
  :=
r   \big(I_{f,g}\big|(1-\ga^{-1})\big)(\tau)
  =
    \widetilde{\psi}_{f,g}(\ga)+\phi(\ga)I_f(\ga^{-1}\tau)\tx{,}\quad\ga\in\SL2\ZZ\tx{.}
  \end{gather}
  To finish the proof of the transformation behavior, we record that
  \begin{gather*}
    \rho(\ga^{-1})\begin{pmatrix}I_{f,g}(\ga\tau)\\I_f(\ga\tau)\\1\end{pmatrix}
      -
      \begin{pmatrix}I_{f,g}(\tau)\\I_f(\tau)\\1\end{pmatrix}
  =
    \begin{pmatrix}
      I_{f,g}|(\ga-1)+\phi(\ga^{-1})I_f(\ga\tau)+\widetilde{\psi}_{f,g}(\ga^{-1})\\
      0\\
      0
    \end{pmatrix}
  =
    0\tx{.}
  \end{gather*}

  Let now $\ga_1,\ga_2\in\SL2\ZZ$. Then~\eqref{eq:theo:polynomial_eichler_shimura} implies that
  $\phi_{f,g}(\ga_1\ga_2,\tau)
  =
  \widetilde{\psi}_{f,g}(\ga_1\ga_2)+\phi(\ga_1\ga_2)I_f(\ga_2^{-1}\ga_1^{-1}\tau)$. On the
  other hand, we have that
  \begin{multline*}
    \phi_{f,g}(\ga_1\ga_2,\tau)
  =
    \big(I_{f,g}\big|(1-\ga_2^{-1})|\ga_1^{-1}\big)(\tau)+\big(I_{f,g}\big|(1-\ga_1^{-1})\big)(\tau)\\
  =
    \phi(\ga_1\ga_2)I_f(\ga_2^{-1}\ga_1^{-1}\tau)
    +
    \phi(\ga_1)\big(I_f(\ga_1^{-1}\tau)-\ga_2.I_f(\ga_2^{-1}\ga_1^{-1}\tau)\big)
    +
    \ga_1.\widetilde{\psi}_{f,g}(\ga_2)+\widetilde{\psi}_{f,g}(\ga_1)\tx{.}
  \end{multline*}
  However $(I_f\big|_{0,\sym{k-2}(X)}(1-\ga^{-1}))(\tau)$ is independent of $\tau$,
  and thus we have that
  \begin{gather*}
    \phi_{I_f}(\ga_2)
  =
    (I_f\big|_{0,\sym{k-2}(X)}(1-\ga_2^{-1}))(\ga_1^{-1}\tau)
  =
    I_f(\ga_1^{-1}\tau)-\ga_2.I_f(\ga_2^{-1}\ga_1^{-1}\tau)\tx{.}
  \end{gather*}
  We thus obtain the identity
  \begin{gather}
    \label{eq:psi_identity}
    \widetilde{\psi}_{f,g}(\ga_1\ga_2)
  =
    \ga_1.\widetilde{\psi}_{f,g}(\ga_2)+\widetilde{\psi}_{f,g}(\ga_1)+\phi(\ga_1)\phi_{I_f}(\ga_2)\tx{.}
  \end{gather}
  This leads immediately to the fact that
  $(\phi,\widetilde{\psi}_{f,g})\in\rmZ^1_\tx{pb}\big(\sym{k-2}(X)\bp_{\phi_{I_f}}\bfone,\,\sym{k-2}(X,Y)\big)$,
  and hence $\rho$ is well-defined.
\end{proof}

An analogous statement holds for~$\cE_{f,g}$.
\begin{theorem}%
\label{theo:scalar_eichler_shimura}
  Let $k\geq 2$ be an integer, and let $f,g\in\rmS_k$. Then the arithmetic
  type $\rho_{f,g}$ given by~\eqref{eq:thin_rep}
  is well-defined, and we have that
  \begin{gather*}
    \begin{pmatrix}
      \cE_{f,g}\\
      I_f\\
      1
    \end{pmatrix}
  \in
    \rmM_0(\rho_{f,g})\tx{.}
  \end{gather*}
\end{theorem}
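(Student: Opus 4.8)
The plan is to obtain the whole statement by transporting the polynomial result of Theorem~\ref{theo:polynomial_eichler_shimura} along the equivariant contraction~$\pi$ of~\eqref{eq:contraction}. Since level-one cusp forms of odd weight vanish, $\rmS_k = \{0\}$ for odd~$k$ and the statement is then vacuous, so I may assume $k$ is even and hence $\sfd := k-2$ is even. The one computational input I would record first is the reproducing property of the pairing: for every $q \in \CC[Y]_{k-2}$ and $z \in \HS$ one has $\langle (X-z)^{k-2},\, q(Y)\rangle = q(z)$, which for even~$\sfd$ is immediate from the defining formula of $\langle\,\cdot\,,\,\cdot\,\rangle$ and the binomial theorem. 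Pulling $\pi$ inside the (absolutely convergent) defining integrals, this gives at once $\pi\big(\widetilde{\psi}_{f,g}(\ga)\big) = \psi_{f,g}(\ga)$ and, since $\langle (X-z)^{k-2}, I_g(z;Y)\rangle = I_g(z;z) = \cE_g(z)$, also $\pi(I_{f,g}) = \cE_{f,g}$.

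First I would prove that $\rho_{f,g}$ is well-defined. Comparing~\eqref{eq:thin_rep} with the general $\rho_{\phi_1,\phi_2,\psi}$ and using $-\phi_{I_g}^\vee = (-\phi_{I_g})^\vee$, one has $\rho_{f,g} = \rho_{\phi_{I_f},\,-\phi_{I_g},\,\psi_{f,g}}$, so by Proposition~\ref{prop:TFAE} it suffices to verify condition~(ii) for $\psi_{f,g}$, namely $\psi_{f,g}(\ga_1\ga_2) = \psi_{f,g}(\ga_1) + \psi_{f,g}(\ga_2) - \langle \phi_{I_g}(\ga_1^{-1}),\, \phi_{I_f}(\ga_2)\rangle$. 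I would obtain this by applying $\pi$ to the cocycle identity~\eqref{eq:psi_identity} for $\widetilde{\psi}_{f,g}$: the first three terms collapse to $\psi_{f,g}(\ga_1\ga_2)$, $\psi_{f,g}(\ga_1)$, $\psi_{f,g}(\ga_2)$ via $\pi(\widetilde{\psi}_{f,g}) = \psi_{f,g}$ and the equivariance of $\pi$ (which kills the prefactor~$\ga_1$), while the remaining term becomes $\pi\big(\phi(\ga_1)\phi_{I_f}(\ga_2)\big) = \langle \ga_1.\phi_{I_f}(\ga_2),\, \phi_{I_g}(\ga_1)\rangle$; invariance of the pairing, the parabolic cocycle relation $\ga_1^{-1}.\phi_{I_g}(\ga_1) = -\phi_{I_g}(\ga_1^{-1})$, and the symmetry of $\langle\,\cdot\,,\,\cdot\,\rangle$ (valid for even $\sfd$) turn this into $-\langle \phi_{I_g}(\ga_1^{-1}),\, \phi_{I_f}(\ga_2)\rangle$, exactly as required.

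It then remains to transport the form itself. The contraction induces $\pi_\ast(p,q,z) = (\pi(p), q, z)$ from $V(\widetilde{\rho}_{f,g})$ to $V(\rho_{f,g})$, and I would check that $\pi_\ast$ intertwines the two types by comparing $\pi_\ast \circ \widetilde{\rho}_{f,g}(\ga)$ with $\rho_{f,g}(\ga) \circ \pi_\ast$ blockwise: the lower two components match on the nose, and the top component reduces to exactly the three identities already assembled --- equivariance of $\pi$ for the $\sym{k-2}(X,Y)$-block, the cocycle-plus-symmetry computation for the $\phi_{I_g}\cdot\sym{k-2}(X)$-block (which produces $-\phi_{I_g}^\vee$), and $\pi(\widetilde{\psi}_{f,g}) = \psi_{f,g}$ for the $\widetilde{\psi}_{f,g}$-block. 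Since $\pi_\ast$ is a fixed equivariant linear map, post-composition preserves holomorphy and moderate growth and carries $\rmM_0(\widetilde{\rho}_{f,g})$ into $\rmM_0(\rho_{f,g})$; applying it to $(I_{f,g}, I_f, 1)^T \in \rmM_0(\widetilde{\rho}_{f,g})$ from Theorem~\ref{theo:polynomial_eichler_shimura} and using $\pi(I_{f,g}) = \cE_{f,g}$ yields $(\cE_{f,g}, I_f, 1)^T \in \rmM_0(\rho_{f,g})$.

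The only genuinely non-formal ingredient --- and what I expect to be the main point to get right --- is the reproducing identity $\langle (X-z)^{k-2}, q(Y)\rangle = q(z)$ together with the sign bookkeeping under the dualization $\phi \mapsto \phi^\vee$ and the variable conventions of the pairing; these are precisely what require $\sfd = k-2$ to be even, and once they are pinned down the rest is the formal pushforward of the already-established depth-two polynomial statement.
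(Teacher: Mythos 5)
Your proposal is correct, but it takes a genuinely different route from the paper. The paper proves this theorem by an argument ``parallel'' to its proof of Theorem~\ref{theo:polynomial_eichler_shimura}: one redoes the same change-of-variables computation directly for $\cE_{f,g}$, $\cE_g$ and $\psi_{f,g}$ in place of $I_{f,g}$, $I_g$ and $\widetilde{\psi}_{f,g}$, obtaining the analogue of~\eqref{eq:psi_identity} and hence well-definedness and modularity from scratch. Only afterwards, in Theorem~\ref{theo:polynomial_to_scalar_eichler_shimura}, does the paper relate the two results via $\pi_\ast$ --- and even there the identity $\pi\circ I_{f,g}=\cE_{f,g}$ is deduced indirectly (both vectors lie in $\rmM_0(\rho_{f,g})$, so the top components differ by an element of $\rmM_0=\CC$ whose constant term vanishes because $f,g$ are cuspidal), not by evaluating the pairing. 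You instead derive everything by pushing Theorem~\ref{theo:polynomial_eichler_shimura} forward along $\pi$, with the reproducing identity $\langle (X-z)^{k-2},q(Y)\rangle=q(z)$ (valid for $k-2$ even, which you correctly reduce to since $\rmS_k=0$ for odd $k$ in level one) doing the work of showing $\pi(I_{f,g})=\cE_{f,g}$ and $\pi(\widetilde{\psi}_{f,g})=\psi_{f,g}$; applying $\pi$ to~\eqref{eq:psi_identity} and using invariance, the relation $\ga^{-1}.\phi_{I_g}(\ga)=-\phi_{I_g}(\ga^{-1})$ and symmetry of the pairing then yields exactly condition~(ii) of Proposition~\ref{prop:TFAE} for $(\phi_{I_f},-\phi_{I_g},\psi_{f,g})$. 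I checked the signs and they come out right. Your route buys a single unified argument that simultaneously establishes this theorem and Theorem~\ref{theo:polynomial_to_scalar_eichler_shimura}, at the cost of front-loading the explicit pairing computation; the paper's route keeps the two statements logically separate and avoids ever computing $\langle (X-z)^{k-2},\cdot\rangle$ explicitly. There is no circularity in your ordering, since you use only Theorem~\ref{theo:polynomial_eichler_shimura} and elementary properties of $\pi$.
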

\begin{proof}
The argument is parallel to the one for Theorem~\ref{theo:polynomial_eichler_shimura}.
\end{proof}
Combining Theorems~\ref{theo:polynomial_eichler_shimura} and~\ref{theo:scalar_eichler_shimura} with the contraction map
$\pi:\sym{k-2}(X,Y)\to\bfone$, we obtain our next theorem.
\begin{theorem}
  \label{theo:polynomial_to_scalar_eichler_shimura}
  Let $k\geq 2$ be an even integer, and let $f,g\in\rmS_k$. Then
  we have that
  \begin{gather*}
    \pi_\ast\circ\begin{pmatrix}I_{f,g}\\I_f\\1\end{pmatrix}
  =
    \begin{pmatrix}\cE_{f,g}\\I_f\\1\end{pmatrix}
  \tx{,}
  \end{gather*}
  where
  $\pi_\ast:\widetilde{\rho}_{f,g}\to\rho_{f,g}$ is the push-forward along the map~\eqref{eq:contraction}, given by
  $\pi_\ast(p,q,z)=(\pi(p),q,z)$.
\end{theorem}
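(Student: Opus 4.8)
The plan is to verify that the linear map $\pi_\ast(p,q,z)=(\pi(p),q,z)$ intertwines the two arithmetic types, that is, that $\pi_\ast\circ\widetilde{\rho}_{f,g}(\ga)=\rho_{f,g}(\ga)\circ\pi_\ast$ for every $\ga\in\SL2\ZZ$, and then to compute the top component $\pi(I_{f,g})$ explicitly. Once the intertwining is established, $\pi_\ast$ is a genuine pushforward morphism, so by Theorem~\ref{theo:polynomial_eichler_shimura} it carries $(I_{f,g},I_f,1)^T\in\rmM_0(\widetilde{\rho}_{f,g})$ into $\rmM_0(\rho_{f,g})$ (recovering Theorem~\ref{theo:scalar_eichler_shimura}); the middle and bottom entries are untouched by $\pi_\ast$, so the asserted identity reduces to the single pointwise statement $\pi(I_{f,g}(\tau))=\cE_{f,g}(\tau)$.

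For the intertwining, comparing the two defining matrices entry-by-entry shows that the lower two rows agree verbatim, so only the top row requires attention, splitting into three identities. First, $\pi\circ\sym{k-2}(X,Y)(\ga)=\pi$ is precisely the $\SL2\ZZ$-equivariance of the contraction map recorded in Section~\ref{sec:prelim}. Second, on the $\CC[X]_{k-2}$-summand I must check $\pi\big((\phi_{I_g}(\ga;Y)\cdot\sym{k-2}(X)(\ga))q\big)=-\phi_{I_g}^\vee(\ga)q$; reading the product as the tensor $\sym{k-2}(X)(\ga)q\otimes\phi_{I_g}(\ga;Y)$, the left-hand side equals $\langle\sym{k-2}(X)(\ga)q,\phi_{I_g}(\ga;Y)\rangle$, and I would substitute $\phi_{I_g}(\ga)=-\sym{k-2}(Y)(\ga)\phi_{I_g}(\ga^{-1})$ (coming from $\phi_{I_g}(\id)=0$) and then invoke invariance together with — crucially, since $k-2$ is even — symmetry of the pairing, landing on $-\langle\phi_{I_g}(\ga^{-1}),q\rangle=-\phi_{I_g}^\vee(\ga)q$. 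Third, on the $\CC$-summand I need $\pi(\widetilde{\psi}_{f,g}(\ga))=\psi_{f,g}(\ga)$, which by linearity of $\pi$ and pulling it through the defining integral reduces to the same contraction identity as the final computation.

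The arithmetic heart of the argument is the reproducing identity
\begin{gather*}
  \big\langle (X-z)^{k-2},\,p(Y)\big\rangle=(-1)^{k-2}\,p(z)=p(z)\tx{,}\qquad p\in\CC[Y]_{k-2}\tx{,}
\end{gather*}
valid because $k-2$ is even. I would obtain it by a direct binomial computation, expanding $(X-z)^{k-2}=\sum_i\binom{k-2}{i}(-z)^{k-2-i}X^i$ so that the factor $\binom{k-2}{i}^{-1}$ in the definition of the pairing cancels the binomial coefficient, and reindexing $i\mapsto(k-2)-i$. Applying it with $p(Y)=I_g(z;Y)$ gives $\langle(X-z)^{k-2},I_g(z;Y)\rangle=I_g(z;z)=\cE_g(z)$. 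Pulling $\pi$ inside the two relevant integrals then yields simultaneously $\pi(\widetilde{\psi}_{f,g}(\ga))=\int_{\ga(i\infty)}^{i\infty}f(z)\,\cE_g(z)\d{z}=\psi_{f,g}(\ga)$ and $\pi(I_{f,g}(\tau))=\int_\tau^{i\infty}f(z)\,\cE_g(z)\d{z}=\cE_{f,g}(\tau)$, completing both the third intertwining identity and the identification of the top component.

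The main obstacle is the bookkeeping of signs and the role of the parity of $k$: unlike Theorems~\ref{theo:polynomial_eichler_shimura} and~\ref{theo:scalar_eichler_shimura}, which are stated for all $k\geq 2$, the present statement leans on $k$ being even, since both the symmetry of $\langle\,\cdot\,,\,\cdot\,\rangle$ used in the second identity and the sign $(-1)^{k-2}$ in the reproducing identity would otherwise obstruct the equalities. Everything else is formal once the reproducing identity is in hand.
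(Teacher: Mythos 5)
Your proof is correct, and while the intertwining computation agrees with the paper's (the identity $\pi\circ(\phi_{I_g}\cdot\sym{k-2}(X))=-\phi_{I_g}^\vee$ via the cocycle relation $\phi_{I_g}(\ga)=-\ga.\phi_{I_g}(\ga^{-1})$, invariance, and the parity-dependent symmetry of the pairing is exactly the paper's first step), you settle the key identity $\pi\circ I_{f,g}=\cE_{f,g}$ by a genuinely different route. The paper argues softly: it invokes Theorem~\ref{theo:scalar_eichler_shimura} to place both $\pi_\ast\circ(I_{f,g},I_f,1)^T$ and $(\cE_{f,g},I_f,1)^T$ in $\rmM_0(\rho_{f,g})$, observes that their difference has vanishing lower components, hence its top component lies in $\rmM_0=\CC$, and kills the constant using cuspidality of $f$ and $g$. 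You instead prove the pointwise reproducing identity $\langle (X-z)^{k-2},p(Y)\rangle=(-1)^{k-2}p(z)$ and pull $\pi$ through the integral, which yields $\pi(I_{f,g}(\tau))=\cE_{f,g}(\tau)$ and $\pi(\widetilde{\psi}_{f,g})=\psi_{f,g}$ directly; your identity checks out against the definition of the pairing (the $\binom{k-2}{i}^{-1}$ cancels the binomial expansion of $(X-z)^{k-2}$ and the reindexing produces the sign $(-1)^{k-2}$). Your approach buys self-containedness --- it does not need Theorem~\ref{theo:scalar_eichler_shimura} as an input, recovers it as a corollary, and explicitly verifies the third intertwining identity $\pi(\widetilde{\psi}_{f,g})=\psi_{f,g}$ that the paper leaves tacit --- while the paper's argument avoids any coefficient manipulation at the cost of invoking the classification $\rmM_0=\CC$ and a growth estimate at the cusp. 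Your closing remark correctly identifies where the evenness of $k$ enters, which the paper does not comment on.
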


\begin{proof}
  Let $\ga\in\SL2\ZZ$ and $p\in\CC[X]_{k-2}$, then
  \begin{gather*}
    \pi\big(\phi_{I_g}(\ga)\cdot\sym{k-2}(X)(\ga)p\big)
  =
    \big\langle\phi_{I_g}(\ga),\ga.p\big\rangle
  =
    \big\langle \ga^{-1}.\phi_{I_g}(\ga), p\big\rangle
  =
    -\big\langle\phi_{I_g}(\ga^{-1}),p\big\rangle\tx{,}
  \end{gather*}
  and therefore $\pi\circ(\phi_{I_g}\cdot\sym{k-2}(X))=-\phi_{I_g}^\vee$.
  That is, $\pi_\ast$ maps~$\wtd\rho_{f,g}$ to~$\rho_{f,g}$ as claimed.

  We consider the difference
  \begin{gather*}
    \pi_\ast\circ\begin{pmatrix}I_{f,g}\\I_f\\1\end{pmatrix}
  -
    \begin{pmatrix}\cE_{f,g}\\I_f\\1\end{pmatrix}
  =
    \begin{pmatrix}\pi\circ I_{f,g} - \cE_{f,g}\\0\\0\end{pmatrix}
  \in
    \rmM_0(\rho_{f,g})
  \tx{.}
  \end{gather*}
  Since the two bottom components vanish, we conclude that~$\pi\circ I_{f,g} - \cE_{f,g} \in \rmM_0$. Since~$f$ and~$g$ are cusp forms, the zeroth Fourier coefficient of~$\pi\circ I_{f,g} - \cE_{f,g}$ vanishes, and we obtain the equality stated in the theorem.

\end{proof}
\begin{remark}
Theorem~\ref{theo:scalar_eichler_shimura} implies that the pair of cocycles
$(\phi_{I_f},-\phi_{I_{g}})$ yields a representation, for which we see that $\rho_{f,g}=\rho_{\phi_{I_f},-\phi_{I_g}}$. Combining this with the orthogonality relation in Theorem~\ref{theo:ortho_cond} we obtain an alternate proof of Pa\c{s}ol-Popa's identity in level~$1$, as mentioned in the introduction.
\end{remark}
\section{Eisenstein series and saturation}
\label{sec:eis_sat}
Let $k$ and $\sfd\geq 0$ be integers, and let $(\phi_1,\phi_2)\in\rmZ^1_\tx{pb}\big(\bfone,\symd(X)\big)^2$
be admissible. In this section, we provide the Eisenstein series of type $\rho_{\phi_1,\phi_2}$
and weight $k$, converging absolutely and locally uniformly on $\HS$ for $k>2+\sfd$.
We also prove a more detailed version of Theorem~\ref{mainthm:sat}.
\subsection{Eisenstein series}
Let $\sfd\geq 0$ and $k>2+\sfd$ be even integers, and let
$(\phi_1,\phi_2)\in\rmZ^1_\tx{pb}\big(\bfone,\symd(X)\big)^2$ be an admissible pair of
parabolic cocycles. Then the weight $k$ Eisenstein series of type
$\rho_{\phi_1,\phi_2}$ is given by the series
\begin{gather*}
  E_k(\tau;\phi_1,\phi_2)
=
  \sum_{[\ga]\in\Ga_\infty\bs\SL2\ZZ}
    \begin{psmatrix}0\\0\\1\end{psmatrix}
      \Big|_{k,\rho_{\phi_1,\phi_2}}\ga
=
  \begin{pmatrix}
    \sum_{[\ga]\in\Ga_\infty\bs\SL2\ZZ}\psi(\ga^{-1})(c\tau+d)^{-k}\\
    E_k^{[1]}(\tau;\phi_1)\\
    E_k
  \end{pmatrix}\tx{,}
\end{gather*}
where $\psi$ is given as in Theorem~\ref{theo:ortho_cond},
$E_k(\tau)=\sum_{[\ga]\in\Ga_\infty\bs\SL2\ZZ}(c\tau+d)^{-k}$ and
$E_k^{[1]}(\tau;\phi_1)$ is the weight $k$ generalized second order Eisenstein
series of type $(\symd(X),\bfone)$ associated to $\phi_1$, given by
\begin{gather*}
  E_k^{[1]}(\tau;\phi_1)
=
  \sum_{[\ga]\in\Ga_\infty\bs\SL2\ZZ}\frac{\phi_1(\ga^{-1})}{(c\tau+d)^{k}}\tx{.}
\end{gather*}
By Lemma 3.10 of~\cite{ahlback_eichler_2022}, we have that $E_k^{[1]}(\tau;\phi_1)$ converges
absolutely and locally uniformly for $k>2+\sfd$. Its Fourier series expansion is
given in Theorem 3.8 of the same paper. We write
\begin{gather*}
  E_k^{[2]}(\tau;\phi_1,\phi_2)
=
  \sum_{[\ga]\in\Ga_\infty\bs\SL2\ZZ}\frac{\psi(\ga^{-1})}{(c\tau+d)^k}\tx{.}
\end{gather*}
Note that since $\phi_1$ and $\phi_2$ are parabolic, we have that
\begin{gather}
  \label{eq:psi_invariant}
  \psi(\pm T^m\ga T^n)
=
  \psi(T^m\ga T^n)
=
  \psi(\ga T^n)
=
  \psi(\ga)\tx{,}
\end{gather}
for any $m,n\in\ZZ$, so that $\psi$ descends to a function on the double
quotient $\Ga_\infty\bs\SL2\ZZ/\Ga_\infty$.

We now have the following theorem.
\begin{theorem}
  \label{theo:eis_conv_fe}
  Let $\sfd\geq0$ and $k>2+\sfd$ be even integers, let $(\phi_1,\phi_2)\in\rmZ^1_\mathrm{pb}\big(\bfone,\symd(X)\big)^2$
  be an admissible pair, and let $\psi$ be given as in Theorem~\ref{theo:ortho_cond}. Then
  $E_k^{[2]}(\tau;\phi_1,\phi_2)$ converges absolutely and locally uniformly
  on $\HS$ and has the following Fourier series expansion
  \begin{gather*}
    E_k^{[2]}(\tau;\phi_1,\phi_2)
  =
    \sum_{n\geq 1}e(n\tau)\frac{(-2\pi i)^k}{(k-1)!}
      \sum_{\substack{[\ga]\in\Ga_\infty\bs\SL2\ZZ/\Ga_\infty\\ [\ga]\neq 1}}
        \frac{n^{k-1}e(nd/c)\psi(\ga^{-1})}{c^k}\tx{.}
  \end{gather*}
\end{theorem}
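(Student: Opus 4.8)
The plan is to prove convergence first and then to read off the Fourier expansion by unfolding the coset sum against the Lipschitz summation formula, exactly as is done for $E_k^{[1]}$ in~\cite{ahlback_eichler_2022}. The only genuinely new ingredient is a polynomial bound on the scalar $\psi(\ga^{-1})$ in terms of the lower-left entry $c$ of $\ga$; granting such a bound, the analytic part of the statement is formally identical to the first-order case.

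To bound $\psi$, I would use that it descends to $\Ga_\infty\bs\SL2\ZZ/\Ga_\infty$ by~\eqref{eq:psi_invariant}, so that it suffices to estimate $\psi$ on double-coset representatives $\ga$ with bottom row $(c,d)$ and $c\geq 1$, $0\leq d<c$. Expanding $\ga$ as a word $T^{a_0}ST^{a_1}S\cdots ST^{a_n}$ coming from the continued fraction of $d/c$ and iterating the relation of Proposition~\ref{prop:TFAE}(ii), every factor equal to a power of $T$ drops out: $\psi(T^m)=0$ and, by parabolicity, $\phi_1(T^m)=0$. Hence $\psi(\ga)$ collapses to a sum of at most $n=O(\log c)$ terms, each of the form $\psi(S)+\langle\phi_2(g^{-1}),\phi_1(S)\rangle$ for a partial product $g$ whose entries are the continued-fraction convergents and hence $O(c)$. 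Since $\phi_1(S)$ is a fixed polynomial and $\|\phi_2(g^{-1})\|=O(c^\sfd)$ by the usual degree-$\sfd$ bound on parabolic $\symd(X)$-cocycles, I obtain $|\psi(\ga^{-1})|=O(c^\sfd\log c)$.

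This bound is the crux of the argument, and I expect it to be the main obstacle. The point is that the \emph{parabolicity of $\phi_1$} forces only the constant polynomial $\phi_1(S)$ to appear inside the pairings, so the product structure of $\psi$ does \emph{not} produce the degree-$2\sfd$ growth one would naively fear; this is precisely what keeps the convergence threshold at $k>2+\sfd$ rather than $k>2+2\sfd$. With $|\psi(\ga^{-1})|=O(c^\sfd\log c)$ in hand, the majorant $\sum_{[\ga]}c^\sfd\log c\,|c\tau+d|^{-k}$ differs from the one controlling $E_k^{[1]}$ only by the factor $\log c$, which is harmless under the strict inequality $k>2+\sfd$; absolute and locally uniform convergence on $\HS$ follows.

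For the Fourier expansion I would unfold over $\Ga_\infty\bs\SL2\ZZ$. The identity coset has $c=0$ and contributes nothing because $\psi$ vanishes on $\Ga_\infty$, so there is no constant term. For $c\geq1$ I write each $d$ coprime to $c$ as $d=d_0+mc$ with $0\leq d_0<c$ and $m\in\ZZ$; by~\eqref{eq:psi_invariant} the factor $\psi(\ga^{-1})$ depends only on $d\bmod c$ and may be pulled outside the sum over $m$. The inner sum $\sum_{m\in\ZZ}(c\tau+d_0+mc)^{-k}=c^{-k}\sum_{m\in\ZZ}(\tau+d_0/c+m)^{-k}$ is evaluated by the Lipschitz formula $\sum_{m\in\ZZ}(w+m)^{-k}=\tfrac{(-2\pi i)^k}{(k-1)!}\sum_{n\geq1}n^{k-1}e(nw)$ with $w=\tau+d_0/c$. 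Interchanging the (absolutely convergent) summations, splitting $e(n(\tau+d_0/c))=e(n\tau)e(nd_0/c)$, and recognizing the remaining sum over the pairs $(c,d_0)$ as the sum over nontrivial double cosets $[\ga]\neq1$ yields the stated expansion. The only bookkeeping point is that, since $-I\in\Ga_\infty$ and $k$ is even, the summand is invariant under $(c,d)\mapsto(-c,-d)$, so representatives with $c\geq1$ enumerate each double coset exactly once.
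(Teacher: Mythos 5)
Your proposal is correct and follows essentially the same route as the paper: the Fourier expansion by unfolding over $\Ga_\infty\bs\SL2\ZZ/\Ga_\infty$ via the invariance~\eqref{eq:psi_invariant} and the Lipschitz formula, and convergence via the continued-fraction word for $\ga$, telescoping the relation of Proposition~\ref{prop:TFAE}(ii) so that parabolicity kills the $T$-factors and yields $|\psi(\ga^{-1})|\ll c^{\sfd}\log c$. The only (cosmetic) difference is that you peel the word from the right, placing the partial products inside $\phi_2$, whereas the paper peels from the left and places them inside $\phi_1$; the paper also spends a few lines verifying the hypothesis $|d|<|c|$ needed for the cocycle growth bound on the convergent matrices, a detail you elide but which causes no difficulty.
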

\begin{proof}
  Reorganizing the defining series for $E_k^{[2]}(\tau;\phi_1,\phi_2)$,
  we obtain
  \begin{gather*}
    \sum_{[\ga]\in\Ga_\infty\bs\SL2\ZZ}\psi(\ga^{-1})(c\tau+d)^{-k}
  =
    \sum_{[\ga]\in\Ga_\infty\bs\SL2\ZZ/\Ga_\infty}\sum_{m\in\ZZ}\psi(T^{-m}\ga^{-1})(c(\tau+m)+d)^{-k}\tx{.}
  \end{gather*}
  However, by~\eqref{eq:psi_invariant} we have that $\psi(T^{-m}\ga^{-1})=\psi(\ga^{-1})$
  and thus we obtain the Fourier series expansion by applying Lipschitz' summation
  formula.

  As for convergence, we note that its enough to show that
  $|\psi(\ga^{-1})|\ll_\epsilon |c|^{\sfd+\epsilon}$ where $[\ga]\in\Ga_\infty\bs\SL2\ZZ/\Ga_\infty$
  and $[\ga]\neq 1$.

  To obtain this bound, we use a bijection between $\Ga_\infty\bs\SL2\ZZ/\Ga_\infty$
  and continued fractions.  We first have a bijection
  $s_1:\Ga_\infty\bs\SL2\ZZ/\Ga_\infty\to\QQ\cap[0,1)\cup\{\infty\}$ given by
  $s_1([\ga])=d'/c'+\lceil-d'/c'\rceil$ where $(c',d')=\mathrm{sgn}(c,d)\cdot(c,d)$.
  Let now $S$ be given by
  \begin{gather*}
    S
  =
    \big\{(0),()\big\}
    \cup
    \big\{(0,\alpha_1,\dots,\alpha_l) \,:\,
        l\geq 1,\alpha_l\geq 2,\forall 1\leq j< l.\, \alpha_j\geq 1\big\}\tx{,}
  \tx{.}
  \end{gather*}
  Then we have a bijection $s_2:S\to\QQ\cap[0,1)\cup\{\infty\}$, given by
  $s_2(())=\infty$ and $s_2(0,\alpha_1,\dots,\alpha_l)=[0;\alpha_1,\dots,\alpha_l]$,
  see~\cite{khinchin_continued_1964}. For convenience, we set~$\alpha_0 = 0$.

  Let $1\neq [\ga]\in\Ga_\infty\bs\SL2\ZZ/\Ga_\infty$ be arbitrary and
  let $\alpha=s_2^{-1}(s_1([\ga]))$. We then have that
  \begin{gather*}
    \psi(\ga^{-1})
  =
    \psi\big(T^{-\alpha_0}S^{-1}\cdots T^{(-1)^{l+1}\alpha_l}S^{-1}\big)
  =
    \psi\big(S^{2(l+1)}T^{-\alpha_0}S\cdots T^{(-1)^{l+1}\alpha_l}S\big)
  =
    \psi\big(T^{-\alpha_0}S\cdots T^{(-1)^{l+1}\alpha_l}S\big)\tx{.}
  \end{gather*}
  Let $\beta_i=(-1)^{l-i+1}\alpha_{l-i}$, $\delta_{-1}=I$, 
  and $\delta_i=T^{\beta_i}S\delta_{i-1}$, so that $[\delta_l]=[\ga^{-1}]$. We then
  have that
  \begin{gather*}
    \psi(\delta_i)-\psi(\delta_{i-1})
  =
    \psi(S)+\phi_2^\vee(S)\phi_1(\delta_{i-1})\tx{.}
  \end{gather*}
  We thus find that
  \begin{gather*}
    \psi(\delta_l)
  =
    \sum_{i=0}^l\big(\psi(\delta_i)-\psi(\delta_{i-1})\big)
%  =
%    (l+1)\psi(S)+\sum_{i=0}^l\phi_2^\vee(S)\phi_1(\delta_{i-1})
  =
    \sum_{i=0}^l\phi_2^\vee(S)\big(\phi_1(\delta_{i-1})-\tfrac12\phi_1(S)\big)\tx{.}
  \end{gather*}
  Using the bound $\sum_{j=0}^\sfd\binom{d}{j}^{-1}\leq2+4/\sfd$, which holds
  for $\sfd\geq1$, we find that
  \begin{gather*}
    \big|\phi_2^\vee(S)\big(\phi_1(\delta_{i-1})-\tfrac12\phi_1(S)\big)\big|
  \leq
    4\|\phi_2(S)\|_1\big\|\phi_1(\delta_{i-1})-\tfrac12\phi_1(S)\big\|_1\tx{.}
  \end{gather*}
  Hence, we obtain the bound
  \begin{gather*}
    |\psi(\delta_l)|\leq 4\|\phi_2(S)\|_1\Big(
                                  \mfrac{l+1}{2}\|\phi_1(S)\|_1
                                  +
                                  \sum_{i=0}^l\|\phi_1(\delta_{i-1})\|_1\Big)\tx{.}
  \end{gather*}
  Lemma 3.10 in~\cite{ahlback_eichler_2022} tells us that for
  $\ga=\big(\begin{smallmatrix}a&b\\c&d\end{smallmatrix}\big)\in\SL2\ZZ$ with $|d|< |c|$
  we have that $\|\phi_1(\ga^{-1})\|_1\leq C_\sfd|c|^\sfd$, for
  a constant $C_\sfd\in\RR_{>0}$. In our case, we have that
  \begin{gather*}
    |d(\delta_i^{-1})/c(\delta_i^{-1})|
  =
    [\a_{l-i};\a_{l-i-1},\dots,\a_l]>1\qquad\tx{for }i<l\tx{,}
  \end{gather*}
  and thus we can apply the bound to $(\delta_i^{-1}S)^{-1}$. We have that
  $S^{-1}.\phi_1(\delta_i)=\phi_1(S^{-1}.\delta_i)-\phi_1(S)$
  and $\|S^{-1}.v\|_1=\|v\|_1$, and thus we obtain
  \begin{align*}
    \sum_{i=0}^l\|\phi_1(\delta_{i-1})\|_1
  \leq
    (l+1)\|\phi_1(S)\|_1+\sum_{i=0}^l\|\phi_1(S^{-1}\delta_{i-1})\|_1
  &\leq
    (l+1)\|\phi_1(S)\|_1+C_\sfd\sum_{i=0}^l|c(\delta_{i-1}^{-1}S)|^\sfd\\
  &=
    (l+1)\|\phi_1(S)\|_1+C_\sfd\sum_{i=0}^l|d(\delta_{i-1}^{-1})|^\sfd\tx{.}
  \end{align*}
  However, $|d(\delta_{i-1}^{-1})|$ is increasing in $i$, whence we obtain that
  \begin{gather*}
    \sum_{i=0}^l|d(\delta_{i-1}^{-1})|^\sfd
  \leq
    (l+1)|d(\delta_{l-1}^{-1})|
  =
    (l+1)|c(\ga)|^\sfd\tx{.}
  \end{gather*}
  In conclusion, we obtain that $|\psi(\ga^{-1})|
  \leq 4\|\phi_1(S)\|_1(l+1)(\tfrac32+C_\sfd|c|^\sfd)$.  Since $\alpha$ corresponds
  to a continued fraction, we have that $l+1\ll\log(|c|)$ and thus
  $|\psi(\ga^{-1})|\ll_\epsilon |c|^{\sfd+\epsilon}$ as desired.
\end{proof}
\subsection{Saturation}
Let $\sfd\geq 0$ and $k\geq 2$ be even integers, let $0\leq j\leq\sfd$ be an integer,
and let $(\phi_1,\phi_2)\in\rmZ^1_\mathrm{pb}(\bfone,\symd(X))^2$ be admissible.

In~\cite{ahlback_eichler_2022}, the authors of this paper and Ahlbäck introduced the $j$th
Eisenstein series of type $\bfone\bp_{\phi_2^\vee}\symd(X)$ and weight $k$, given
by
\begin{gather*}
  E_k\big(\tau;\bfone\bp_{\phi_2^\vee}\symd(X),j\big)
=
  \sum_{[\ga]\in\Ga_\infty\bs\SL2\ZZ}
    \begin{psmatrix}0\\(X-\tau)^j\end{psmatrix}
      \big|_{k,\bfone\bp_{\phi_2^\vee}\symd(X)}\ga
\tx{,}\quad\tau\in\HS\tx{.}
\end{gather*}
Let $k_0>2+\sfd$ be an integer. Then the $\rmM_\bullet$-module of Eisenstein
series of type $\bfone\bp_{\phi_2^\vee}\symd(X)$ is given by
\begin{gather*}
  \rmE_{\geq k_0}\big(\bfone\bp_{\phi_2^\vee}\symd(X)\big)
=
  \mathrm{span}\;\rmM_\bullet
    \big\{
      E_k\big(\tau;\bfone\bp_{\phi_2^\vee}\symd(X),j\big)\,:\,k\geq k_0\tx{, }0\leq j\leq\sfd
    \big\}
\tx{.}
\end{gather*}
Theorem 4.3 of~\cite{ahlback_eichler_2022} implies that for
$\sfd\geq0$ and $k_0>2+\sfd$ it holds that
\begin{gather}
  \label{eq:old_thm_vvmf}
  \big(\rmE_{\geq k_0}\big(\bfone\bp_{\phi_2^\vee}\symd(X)\big)+\iota(\rmM_\bullet)
  \colon
  \Delta^\infty\big)
=
  \rmM_\bullet\big(\bfone\bp_{\phi_2^\vee}\symd(X)\big)\tx{,}
\end{gather}
where $\iota(f)=(f,0)^T$. In Remark~4.4 of the same paper we mention that the
proof can be generalized to higher depths by induction. Following this approach,
we obtain our last theorem.
\begin{theorem}
  \label{theo:sat}
  Let $\sfd\geq 0$ be an even integer and $k_0>2+\sfd$ be an integer, and
  $(\phi_1,\phi_2)\in\rmZ^1_\mathrm{pb}\big(\bfone,\symd(X)\big)^2$ be admissible. Then
  it holds that
  \begin{gather*}
    \rmM_\bullet(\rho_{\phi_1,\phi_2})
  =
    \big(\rmE_{\geq k_0}(\rho_{\phi_1,\phi_2})
    +
    \iota_1\big(\rmE_{\geq k_0}(\bfone\bp_{\phi_2^\vee}\symd(X))\big)
    +
    \iota_2(\rmM_\bullet)
    \colon
    \Delta^\infty\big)\tx{,}
  \end{gather*}
  where $\iota_1(f,g)=(f,g,0)^T$ and $\iota_2(f)=(f,0,0)^T$.
\end{theorem}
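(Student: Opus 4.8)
The plan is to exploit the extension structure of $\rho_{\phi_1,\phi_2}$ recorded in Proposition~\ref{prop:TFAE}, namely that $\rho_{\phi_1,\phi_2}=(\bfone\bp_{\phi_2^\vee}\symd(X))\bp_{(\psi,\phi_1)^T}\bfone$, which exhibits a short exact sequence $0\to\bfone\bp_{\phi_2^\vee}\symd(X)\xrightarrow{\iota_1}\rho_{\phi_1,\phi_2}\xrightarrow{\mathrm{pr}}\bfone\to0$, where $\iota_1$ realizes $\bfone\bp_{\phi_2^\vee}\symd(X)$ as the subrepresentation on the top two components and $\mathrm{pr}$ projects onto the bottom component. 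First I would dispatch the inclusion $\supseteq$: the series $E_k(\tau;\rho_{\phi_1,\phi_2})$ lie in $\rmM_k(\rho_{\phi_1,\phi_2})$ by the averaging construction together with the convergence established in Theorem~\ref{theo:eis_conv_fe} and in~\cite{ahlback_eichler_2022} (the seed vector $(0,0,1)^T$ is $\Ga_\infty$-invariant because $\phi_1$ and $\psi$ vanish on $\Ga_\infty$), while $\iota_1$ and $\iota_2$ are morphisms of arithmetic types and hence carry modular forms to modular forms. Since the saturation is taken inside $M=\rmM_\bullet(\rho_{\phi_1,\phi_2})$, the inclusion $(I:\Delta^\infty)\subseteq\rmM_\bullet(\rho_{\phi_1,\phi_2})$ is then automatic.

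For the reverse inclusion I would fix $F=(F_0,F_1,F_2)^T\in\rmM_k(\rho_{\phi_1,\phi_2})$ and produce $n$ with $\Delta^nF\in I$. As $\mathrm{pr}$ is equivariant, the bottom component $F_2=\mathrm{pr}\circ F$ is a scalar modular form in $\rmM_k$. The key auxiliary fact is that the $\rmM_\bullet$-ideal generated by $\{E_j:j\geq k_0\text{ even}\}$ contains a power of $\Delta$: indeed $E_4$ and $E_6$, hence $E_4^a$ and $E_6^b$ with $4a,6b\geq k_0$, have no common zero on the compactified modular curve (their common zeros would be zeros of $\Delta$ on $\HS$, but $\Delta$ is nonvanishing there, and at the cusp both are $1$), so by the projective Nullstellensatz the homogeneous ideal they generate contains $(\rmM_{>0})^N$ and in particular $\Delta^{n_1}$ for some $n_1$. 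Multiplying by $F_2$ then gives $\Delta^{n_1}F_2=\sum_j g_j\,E_{k_j}$ with $g_j\in\rmM_\bullet$ and $k_j\geq k_0$. Setting $G=\sum_j g_j\,E_{k_j}(\tau;\rho_{\phi_1,\phi_2})\in\rmE_{\geq k_0}(\rho_{\phi_1,\phi_2})\subseteq I$, the bottom component of $G$ equals $\sum_j g_jE_{k_j}=\Delta^{n_1}F_2$, which is also the bottom component of $\Delta^{n_1}F$.

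Consequently $\Delta^{n_1}F-G$ has vanishing bottom component, so it takes values in the invariant subspace $\iota_1(V(\bfone\bp_{\phi_2^\vee}\symd(X)))$ and, by equivariance of $\iota_1$, equals $\iota_1(H)$ for a unique $H\in\rmM_\bullet(\bfone\bp_{\phi_2^\vee}\symd(X))$. At this point the depth-one saturation result~\eqref{eq:old_thm_vvmf} applies and yields $n_2$ with $\Delta^{n_2}H\in\rmE_{\geq k_0}(\bfone\bp_{\phi_2^\vee}\symd(X))+\iota(\rmM_\bullet)$, where $\iota(f)=(f,0)^T$. Pushing this through $\iota_1$ and using the identity $\iota_1\circ\iota=\iota_2$, I obtain $\Delta^{n_2}(\Delta^{n_1}F-G)\in\iota_1(\rmE_{\geq k_0}(\bfone\bp_{\phi_2^\vee}\symd(X)))+\iota_2(\rmM_\bullet)\subseteq I$. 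Since $\Delta^{n_2}G\in I$ as well, adding the two contributions gives $\Delta^{n_1+n_2}F\in I$, which is exactly the asserted saturation identity.

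The hard part is not this bookkeeping but the input it rests on. The genuinely substantive ingredient is the depth-one statement~\eqref{eq:old_thm_vvmf}, the generalization of Theorem~4.3 of~\cite{ahlback_eichler_2022}; the present argument is best read as the single inductive step promised in Remark~4.4 of loc.\ cit., in which the bottom component is stripped away before the lower-depth result is invoked. The second, milder obstacle is ensuring that the reduction of $F_2$ uses only Eisenstein series of weight $\geq k_0$, since those are the only weights for which $E_k(\tau;\rho_{\phi_1,\phi_2})$ is available; this is precisely why one saturates at $\Delta$ rather than obtaining an exact identity, and why the hypothesis $k_0>2+\sfd$ governing convergence of the vector-valued Eisenstein series is needed.
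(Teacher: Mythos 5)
Your proof is correct and follows essentially the same route as the paper: both rest on the exact sequence $0\to\bfone\bp_{\phi_2^\vee}\symd{}(X)\to\rho_{\phi_1,\phi_2}\to\bfone\to0$, the fact that the scalar Eisenstein series of weight $\geq k_0$ generate $\rmM_\bullet$ up to saturation at $\Delta$, and the depth-one result~\eqref{eq:old_thm_vvmf}. The only difference is presentational: the paper packages the element chase into a commutative diagram with exact rows and invokes the Four Lemma (taking $(\rmE_{\geq k_0}\colon\Delta^\infty)=\rmM_\bullet$ as known), whereas you unwind that chase by hand and supply the Nullstellensatz-type argument for the scalar step explicitly.
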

\begin{proof}
Consider the following diagram
\[
  \begin{tikzcd}
    \rmM_\bullet(\bfone\bp_{\phi_2^\vee}\symd(X))
    \arrow[hookrightarrow,"\mu_1"]{r}\arrow["="]{d}&
    (\rmE_{\geq k_0}(\rho_{\phi_1,\phi_2})+\mu_1(\rmM_\bullet(\bfone\bp_{\phi_2^\vee}\symd(X)))\colon\Delta^\infty)
    \arrow["\nu_1"]{r}\arrow[hookrightarrow,"f\mapsto f"]{d}&
    (\rmE_{\geq k_0}\colon\Delta^\infty)
    \arrow["="]{d}
    \\
    \rmM_\bullet(\bfone\bp_{\phi_2^\vee}\symd(X))
    \arrow[hookrightarrow,"\mu_2"]{r}&
    \rmM_\bullet(\rho_{\phi_1,\phi_2})
    \arrow["\nu_2"]{r}&
    \rmM_\bullet
  \end{tikzcd}
\]
where $\mu_i(f,g)=(f,g,0)^T$ and $\nu_i(f,g,h)=h$ for $i\in\{1,2\}$. It is clear
that the rows are exact and that the diagram commutes. We have that
$\mathrm{im}(\nu_1)\subseteq\mathrm{im}(\nu_2)$ and thus the map
$\theta:\mathrm{coker}(\nu_1)\to\mathrm{coker}(\nu_2)$ given by
$\theta(h+\mathrm{im}(\nu_1))=h+\mathrm{im}(\nu_2)$ is well-defined. Since
$E_k=\nu_1(E_k(\cdot;\phi_1,\phi_2))$ it is also injective. The
Four Lemma now implies that the vertical map in the middle column is surjective
and thus an equality. Finally, we conclude the proof by
applying~\eqref{eq:old_thm_vvmf} and observing that
\begin{multline*}
  \big(\rmE_{\geq k_0}(\rho_{\phi_1,\phi_2})
  +
  \mu_1\big(
    (\rmE_{\geq k_0}(\bfone\bp_{\phi_2^\vee}\symd(X))
    +
    \iota(\rmM_\bullet)
    \colon
    \Delta^\infty)
       \big)
  \colon
  \Delta^\infty\big)\\
=
  \big(\rmE_{\geq k_0}(\rho_{\phi_1,\phi_2})
  +
  \iota_1\big(\rmE_{\geq k_0}(\bfone\bp_{\phi_2^\vee}\symd(X))\big)
  +
  \iota_2(\rmM_\bullet)
  \colon
  \Delta^\infty
  \big)
\tx{.}
\end{multline*}
\end{proof}
%%%%%%%%%%%%%%%%%%%%%%%%%%%%%%%%%%%%%%%%%%%%%%%%%%
%%% BIBLIOGRAPHY

\ifbool{nobiblatex}{%
  \bibliographystyle{alpha}%
  \bibliography{bibliography.bib}%

@article{ahlback_eichler_2022,
	title = {Eichler integrals and generalized second order Eisenstein series},
	url = {http://arxiv.org/abs/2203.15462},
	author = {Ahlbäck, Albin and Magnusson, Tobias and Raum, Martin},
	urldate = {2022-04-12},
	date = {2022-03-29},
	eprinttype = {arxiv},
	eprint = {2203.15462 [math]},
	keywords = {Mathematics - Number Theory, 11F11, 11F30, 11F75},
}

@article{brown_multiple_2017,
	title = {Multiple Modular Values and the relative completion of the fundamental group of $\mathcal{M}_{1,1}$},
	url = {http://arxiv.org/abs/1407.5167},
	author = {Brown, Francis},
	urldate = {2022-04-29},
	date = {2017-06-19},
	eprinttype = {arxiv},
	eprint = {1407.5167 [math]},
	keywords = {Mathematics - Number Theory, 11M32, 11F67},
}

@article{brown_class_2018,
	title = {A class of non-holomorphic modular forms. I},
	volume = {5},
	issn = {2522-0144},
	doi = {10.1007/s40687-018-0130-8},
	pages = {40},
	number = {1},
	journaltitle = {Res. Math. Sci.},
	author = {Brown, Francis},
	date = {2018},
	keywords = {11F37},
}

@article{brown_class_2020,
	title = {A class of nonholomorphic modular forms. {II}: Equivariant iterated Eisenstein integrals},
	volume = {8},
	issn = {2050-5094},
	doi = {10.1017/fms.2020.24},
	pages = {62},
	journaltitle = {Forum Math. Sigma},
	author = {Brown, Francis},
	date = {2020},
	keywords = {11F37, 11F67, 11M32, 81T30},
}

@article{diamantis_modular_2022,
	title = {Modular iterated integrals associated with cusp forms},
	volume = {34},
	issn = {0933-7741},
	doi = {10.1515/forum-2021-0224},
	pages = {157--174},
	number = {1},
	journaltitle = {Forum Math.},
	author = {Diamantis, Nikolaos},
	date = {2022},
	keywords = {11F37},
}

@article{dorigoni_poincare_2022,
	title = {Poincaré series for modular graph forms at depth two. {I}. Seeds and Laplace systems},
	volume = {2022},
	issn = {1029-8479},
	url = {https://doi.org/10.1007/JHEP01(2022)133},
	doi = {10.1007/JHEP01(2022)133},
	abstract = {We derive new Poincaré-series representations for infinite families of non-holomorphic modular invariant functions that include modular graph forms as they appear in the low-energy expansion of closed-string scattering amplitudes at genus one. The Poincaré series are constructed from iterated integrals over single holomorphic Eisenstein series and their complex conjugates, decorated by suitable combinations of zeta values. We evaluate the Poincaré sums over these iterated Eisenstein integrals of depth one and deduce new representations for all modular graph forms built from iterated Eisenstein integrals at depth two. In a companion paper, some of the Poincaré sums over depth-one integrals going beyond modular graph forms will be described in terms of iterated integrals over holomorphic cusp forms and their L-values.},
	pages = {133},
	number = {1},
	journaltitle = {J. High Energ. Phys.},
	author = {Dorigoni, Daniele and Kleinschmidt, Axel and Schlotterer, Oliver},
	urldate = {2022-06-14},
	date = {2022-01-25},
	langid = {english},
	keywords = {Conformal Field Theory, Differential and Algebraic Geometry, String Duality, Superstrings and Heterotic Strings},
}

@article{dorigoni_poincare_2022-1,
	title = {Poincaré series for modular graph forms at depth two. {II}. Iterated integrals of cusp forms},
	volume = {2022},
	issn = {1029-8479},
	url = {http://arxiv.org/abs/2109.05018},
	doi = {10.1007/JHEP01(2022)134},
	abstract = {We continue the analysis of modular invariant functions, subject to inhomogeneous Laplace eigenvalue equations, that were determined in terms of Poincar{\textbackslash}'e series in a companion paper. The source term of the Laplace equation is a product of (derivatives of) two non-holomorphic Eisenstein series whence the modular invariants are assigned depth two. These modular invariant functions can sometimes be expressed in terms of single-valued iterated integrals of holomorphic Eisenstein series as they appear in generating series of modular graph forms. We show that the set of iterated integrals of Eisenstein series has to be extended to include also iterated integrals of holomorphic cusp forms to find expressions for all modular invariant functions of depth two. The coefficients of these cusp forms are identified as ratios of their L-values inside and outside the critical strip.},
	pages = {134},
	number = {1},
	journaltitle = {J. High Energ. Phys.},
	author = {Dorigoni, Daniele and Kleinschmidt, Axel and Schlotterer, Oliver},
	urldate = {2022-06-14},
	date = {2022-01},
	eprinttype = {arxiv},
	eprint = {2109.05018 [hep-th]},
	keywords = {High Energy Physics - Theory, Mathematics - Number Theory},
}

@article{dhoker_integral_2019,
	title = {Integral of two-loop modular graph functions},
	volume = {2019},
	issn = {1029-8479},
	url = {https://doi.org/10.1007/JHEP06(2019)092},
	doi = {10.1007/JHEP06(2019)092},
	abstract = {The integral of an arbitrary two-loop modular graph function over the fundamental domain for {SL}(2, ℤ) in the upper half plane is evaluated using recent results on the Poincaré series for these functions.},
	pages = {92},
	number = {6},
	journaltitle = {J. High Energ. Phys.},
	author = {D’Hoker, Eric},
	urldate = {2022-06-14},
	date = {2019-06-19},
	langid = {english},
	keywords = {Scattering Amplitudes, Superstrings and Heterotic Strings},
}

@article{dhoker_fourier_2018,
	title = {Fourier series of modular graph functions},
	volume = {192},
	issn = {0022-314X},
	url = {https://www.sciencedirect.com/science/article/pii/S0022314X18301318},
	doi = {10.1016/j.jnt.2018.04.012},
	abstract = {Modular graph functions associate to a graph an {SL}(2,Z)-invariant function on the upper half plane. We obtain the Fourier series of modular graph functions of arbitrary weight w and two-loop order. The motivation for this work is to develop a deeper understanding of the origin of the algebraic identities between modular graph functions which have been discovered recently, and of the relation between the existence of these identities and the occurrence of cusp forms. We show that the constant Fourier mode, as a function of the modulus τ, consists of a Laurent polynomial in y=πImτ of degree (w,1−w), plus a contribution which decays exponentially as y→∞. The Laurent polynomial is a linear combination with rational coefficients of the top term yw, and lower order terms ζ(2k+1)yw−2k−1 for 1≤k≤w−1, as well as terms ζ(2w−2ℓ−3)ζ(2ℓ+1)y2−w for 1≤ℓ≤w−3. The exponential contribution is a linear combination of exponentials of y and incomplete Γ-functions whose coefficients are Laurent polynomials in y with rational coefficients.},
	pages = {1--36},
	journaltitle = {Journal of Number Theory},
	author = {D'Hoker, Eric and Duke, William},
	urldate = {2022-06-14},
	date = {2018-11-01},
	langid = {english},
	keywords = {Modular graph functions},
}

@article{mertens_modular_2021,
	title = {Modular forms of virtually real-arithmetic type I: mixed mock modular forms yield vector-valued modular forms},
	volume = {28},
	issn = {1073-2780},
	doi = {10.4310/MRL.2021.v28.n2.a7},
	pages = {511--561},
	number = {2},
	journaltitle = {Math. Res. Lett.},
	author = {Mertens, Michael H. and Raum, Martin},
	date = {2021},
	keywords = {11F30, 11F37},
}

@article{pasol_modular_2013,
	title = {Modular forms and period polynomials},
	volume = {107},
	issn = {0024-6115},
	doi = {10.1112/plms/pdt003},
	pages = {713--743},
	number = {4},
	journaltitle = {Proc. Lond. Math. Soc. (3)},
	author = {Paşol, Vicenţiu and Popa, Alexandru A.},
	date = {2013},
	keywords = {11F11, 11F30, 11F67},
}

@misc{kohnen_modular_1984,
	title = {Modular forms with rational periods},
	author = {Kohnen, W. and Zagier, Don},
	date = {1984},
	note = {Published: Modular forms, Symp. Durham/Engl. 1983, 197-249.},
	keywords = {11F11, 11R42},
}

@article{westerholt-raum_products_2017,
	title = {Products of vector valued Eisenstein series},
	volume = {29},
	issn = {0933-7741},
	doi = {10.1515/forum-2014-0198},
	pages = {157--186},
	number = {1},
	journaltitle = {Forum Math.},
	author = {Westerholt-Raum, Martin},
	date = {2017},
	keywords = {11F11, 11F67},
}

@misc{shimura_introduction_1971,
	title = {Introduction to the arithmetic theory of automorphic functions},
	author = {Shimura, Goro},
	date = {1971},
	note = {Published: Publications of the Mathematical Society of Japan. 11. Princeton, {NJ}: Iwanami Shoten, Publishers and Princeton University Press 1971. {XIII}, 267 p.},
	keywords = {11-02, 11Fxx, 11G05, 11G15},
}

@book{lang_introduction_1976,
	title = {Introduction to modular forms. (With two appendices, by D. B. Zagier and by W. Feit)},
	volume = {222},
	series = {Grundlehren Math. Wiss.},
	publisher = {Springer, Cham},
	author = {Lang, Serge},
	date = {1976},
	note = {{ISSN}: 0072-7830},
	keywords = {11-02, 11F12, 11R32, 11R34, 11S40, 14-02, 14G20, 14G25, 14H10},
}

@article{haberland_perioden_1983,
	title = {Perioden von Modulformen einer Variabler und Gruppencohomologie, I},
	volume = {112},
	issn = {1522-2616},
	url = {https://onlinelibrary.wiley.com/doi/abs/10.1002/mana.19831120113},
	doi = {10.1002/mana.19831120113},
	pages = {245--282},
	number = {1},
	journaltitle = {Mathematische Nachrichten},
	author = {Haberland, Klaus},
	urldate = {2022-06-14},
	date = {1983},
	langid = {english},
}

@misc{khinchin_continued_1964,
	title = {Continued fractions},
	author = {Khinchin, A. Ya.},
	date = {1964},
	note = {Published: Chicago and London: The University of Chicago Press. xi, 95 pp.},
	keywords = {11-01, 11A55, 11B37, 11J70, 11K50},
}

@incollection{manin_iterated_2006,
	title = {Iterated integrals of modular forms and noncommutative modular symbols},
	isbn = {978-0-8176-4471-0},
	pages = {565--597},
	booktitle = {Algebraic geometry and number theory. In Honor of Vladimir Drinfeld's 50th birthday},
	publisher = {Basel: Birkhäuser},
	author = {Manin, Yuri I.},
	date = {2006},
	doi = {10.1007/978-0-8176-4532-8_10},
	keywords = {11F67, 11G55, 11M32},
}

@article{manin_iterated_2005,
	title = {Iterated Shimura integrals},
	volume = {5},
	issn = {1609-3321},
	pages = {869--881},
	number = {4},
	journaltitle = {Mosc. Math. J.},
	author = {Manin, Yuri I.},
	date = {2005},
	keywords = {11F67, 11G55, 11M32, 11M55},
}
}{%
  \Needspace*{4em}
  \printbibliography[heading=none]%[heading=bibnumbered]
}

%%%%%%%%%%%%%%%%%%%%%%%%%%%%%%%%%%%%%%%%%%%%%%%%%%
%%% AFFILIATIONS

\Needspace*{3\baselineskip}
\noindent
\rule{\textwidth}{0.15em}

{\noindent\small
%Tobias Magnusson\\
Chalmers tekniska högskola och Göteborgs Universitet,
Institutionen för Matematiska vetenskaper,
SE-412 96 Göteborg, Sweden\\
E-mail: \url{tobmag@chalmers.se}\\
Homepage: \url{https://tobiasmagnusson.com}
}\vspace{.5\baselineskip}

{\noindent\small
%Martin Raum\\
Chalmers tekniska högskola och G\"oteborgs Universitet,
Institutionen f\"or Matematiska vetenskaper,
SE-412 96 G\"oteborg, Sweden\\
E-mail: \url{martin@raum-brothers.eu}\\
Homepage: \url{http://martin.raum-brothers.eu}
}%\vspace{.5\baselineskip}

%%%%%%%%%%%%%%%%%%%%%%%%%%%%%%%%%%%%%%%%%%%%%%%%%%
%%% TODOS

\ifdraft{%
\listoftodos%
}

\end{document}

%% vim: spell spelllang=en_us